\documentclass[11 pt, reqno]{amsart}
\usepackage{amsmath,times,epsfig,amssymb,amsbsy,amscd,amsfonts,amstext,graphicx,color,bm,amsthm}
\usepackage[all]{xy}
\usepackage{graphicx}
\usepackage{hyperref, comment}

\usepackage{tikz,subfigure}
\usepackage{tikz-cd}

\textwidth=16.00cm \textheight=22.00cm \topmargin=0.00cm
\oddsidemargin=0.00cm \evensidemargin=0.00cm \headheight=0cm
\headsep=0.5cm 
\linespread{1.1}

\newcommand{\KK}{\mathbb{K}}

\newcommand{\RR}{\mathbb{R}}
\newcommand{\PP}{\mathbb{P}}

\newcommand{\A}{\mathcal{A}}
\newcommand{\T}{\mathcal{T}}
\newcommand{\OO}{\mathcal{O}}

\newcommand \ideal[1] {\langle #1 \rangle}

\newtheorem{Theorem}{Theorem}[section]

\newtheorem{Lemma}[Theorem]{Lemma}
\newtheorem{Proposition}[Theorem]{Proposition}
\newtheorem{Corollary}[Theorem]{Corollary}
\newtheorem{Conjecture}[Theorem]{Conjecture}

\theoremstyle{definition}

\newtheorem{Definition}[Theorem]{Definition}
\newtheorem{Remark}[Theorem]{Remark}
\newtheorem{Example}[Theorem]{Example}

\DeclareMathOperator{\POexp}{POexp}
\DeclareMathOperator{\depth}{depth}
\DeclareMathOperator{\Ass}{Ass}
\DeclareMathOperator{\rk}{rk}
\DeclareMathOperator{\Der}{Der}
\DeclareMathOperator{\projdim}{projdim}
\DeclareMathOperator{\pdeg}{pdeg}
\DeclareMathOperator{\codim}{codim}
\DeclareMathOperator{\gin}{gin}
\DeclareMathOperator{\rgin}{rgin}
\DeclareMathOperator{\LT}{LT}
\DeclareMathOperator{\GL}{GL}
\DeclareMathOperator{\HF}{HF}

\DeclareMathOperator{\Ext}{Ext}
\DeclareMathOperator{\reg}{reg}
\DeclareMathOperator{\sat}{sat}



\begin{document}

\title[Lefschetz properties and $3$-dimensional arrangements]{Lefschetz properties and the Jacobian algebra of $3$-dimensional hyperplane arrangements}

\begin{abstract}
In this article, we study the weak and strong Lefschetz of higher dimensional quotients and dimension 1 almost complete intersections. We then apply the obtained results to the study of the Jacobian algebra of hyperplane arrangements.
\end{abstract}

\author{Simone Marchesi}
\address{Simone Marchesi, Universitat de Barcelona, Facultat de Matem\`atiques i Inform\`atica, Gran Via de les Corts Catalanes 585, 08007, Barcelona, Spain.\\ 
Centre de Recerca Matem\`atica Edifici C, Campus Bellaterra, 08193 Bellaterra, Spain}
\email{marchesi@ub.edu}
\author{Elisa Palezzato}
\address{Elisa Palezzato, Department of Mathematics, Hokkaido University, Kita 10, Nishi 8, Kita-Ku, Sapporo 060-0810, Japan.}
\email{elisa.palezzato@gmail.com}
\author{Michele Torielli}
\address{Michele Torielli, Department of Mathematics \& Statistics, Northern Arizona University,
801 S Osborne Drive,
Flagstaff, AZ 86011, USA.}
\email{michele.torielli@nau.edu}


\date{\today}
\maketitle



\section{Introduction}
Recently there has been an increasing interest in the weak and strong Lefschetz properties and, in particular, in understanding which homogeneous ideals fail the Lefschetz properties. This interest have been mainly motivated by the fact that many authors proved, using different techniques, that any Artinian monomial complete intersection has the SLP, see for example \cite{Stanley80} and \cite{Reid91}. In addition, this interested have been strongly boosted by the fact that the WLP and SLP are strongly connected to many topics in algebraic geometry, commutative algebra and combinatorics.
Moreover, the study of Lefschetz properties has already been linked to the theory of free hyperplane arrangements and to Terao's conjecture. See for example \cite{cookline2018}, \cite{di2014singular}, \cite{palezzato2020lefschetz} and \cite{palezzato2020klef}.
We refer to \cite{harima2013lefschetz} for an overview of the Lefschetz properties in the Artinian case, and to \cite{migliore2013survey} for several open questions in the area.

The goal of this paper is to continue and extend the study of the Lefschetz properties for non-Artinian algebras the second and third authors started in \cite{palezzato2020lefschetz} and \cite{palezzato2020klef}.

This paper is organized as follows. In Section 2, we recall the notions of generic initial ideals, and of  weak and strong Lefschetz properties and their relations. In Section 3, we first consider the quotients whose dimension is bigger or equal than 2, providing equivalent conditions to ensure the strong Lefschetz property (SLP) (see Theorem \ref{theo:bigequivbigdimSLP}). Furthermore, we prove the equivalence of the strong and weak Lefschetz properties in this case. After that, we focus on the 1-dimensional case, proving that the quotient ring always satisfies the weak Lefschetz property (WLP) (see Theorem \ref{theo:WLP3var1dim}).\\
Finally, in Section 4, we apply our results to the particular case of hyperplane arrangements. In particular, we show that:
\begin{itemize}
    \item WLP holds for any such algebra in the 3-space (see Theorem \ref{theo:WLParr3var}) and we determine in which degree it fails for $n$-spaces with $n>3$ (see Theorem \ref{theo-WLPfail}).
    \item SLP holds for any algebra given by plus-one generated arrangement (see Theorem \ref{theo-plusone}).
\end{itemize}

\section{Preliminaries}

In this section, we will introduce the required definition, recall some known result and present some new ones that will be used throughout this work.


\subsection{Generic initial ideals}
Let $\KK$ be a field of characteristic $0$ and consider the polynomial ring $S=\KK[x_0,\dots,x_n]$. 
 
A monomial ideal $I$ of $S$ is said to be \textit{strongly stable} if, for every power-product $t \in I$ and
every $i,j$ such that $1\le i<j\le l$ and $x_j|t$, the power-product $x_i\cdot t/x_j\in I$.


Consider a term ordering $\sigma$ on $S$ and $f$ a non-zero polynomial in $S$ and take 
$$\LT_\sigma(f)= \max_\sigma\{{\rm Supp}(f)\}$$ 
being ${\rm Supp}(f)$ the set of all power-products appearing with non-zero coefficient in $f$. Given an ideal $I$ in $S$, the 
\textit{leading term ideal} of $I$ (known also as the \textit{initial ideal} of $I$), denoted by $\LT_\sigma(I)$, is defined as generated by
 $$\{\LT_\sigma(f)~|~f \in I\setminus\{0\}\}.$$

Galligo proves in \cite{galligo1974propos} the following result, which relates the two notions we have recalled.

\begin{Theorem}\label{thm:Galligo}
Let $I$ be a homogeneous ideal of $S$, with $\sigma$ a term ordering such that
$x_0>_\sigma x_2 >_\sigma \dots >_\sigma x_n$. Then there exists a Zariski
open set $U\subseteq\GL(l)$ and a strongly stable ideal $J$ such that for each  $g\in U$, $\LT_\sigma(g(I)) = J$.
\end{Theorem}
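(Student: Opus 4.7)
The plan is to prove the theorem in two stages: first establish that the assignment $g\mapsto\LT_\sigma(g(I))$ is constant on some nonempty Zariski open subset $U\subseteq\GL(l)$, and then show that the common value $J$ is strongly stable.

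For the first stage, I would argue degree by degree using Grassmannian semicontinuity. Fix $d$ and set $e_d=\dim_\KK I_d$. The map $g\mapsto (g\cdot I)_d$ defines a morphism $\Phi_d\colon\GL(l)\to\mathrm{Gr}(e_d,S_d)$. For each set $M$ of $e_d$ degree-$d$ monomials, the locus where the monomials outside $M$ form exactly $\LT_\sigma(g(I))_d$ is a locally closed subset of $\GL(l)$, described by the non-vanishing of a single Pl\"ucker coordinate together with the vanishing of $\sigma$-larger ones. Among all $M$ that actually occur in the image of $\Phi_d$, there is a unique $\sigma$-minimal choice giving a nonempty Zariski open $U_d$ on which $\LT_\sigma(g(I))_d$ is constant. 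Only finitely many degrees $d$ must be treated separately (bounded by the Castelnuovo--Mumford regularity of the candidate generic initial ideal, which is itself bounded in terms of $I$), so the intersection $U=\bigcap_d U_d$ is still open and defines a monomial ideal $J=\LT_\sigma(g(I))$ for all $g\in U$.

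For the second stage, let $B^-\subseteq\GL(l)$ denote the subgroup of lower-triangular matrices with respect to the ordered basis $x_0,\dots,x_n$. Since $b(x_j)=b_{jj}x_j+\sum_{i>j}b_{ij}x_i$ has $\sigma$-leading term $b_{jj}x_j$, a direct expansion gives $\LT_\sigma(b\cdot f)=\LT_\sigma(f)$ (up to a nonzero scalar) for every $b\in B^-$ and every nonzero $f\in S$; hence $\LT_\sigma(b\cdot g\cdot I)=\LT_\sigma(g\cdot I)=J$ for every $g\in U$. To convert this invariance into the genuine ideal-level statement $b\cdot J=J$, I would realize $J$ as the Gr\"obner flat limit $\lim_{t\to 0}\mu_t\cdot g\cdot I$ along a one-parameter subgroup $\mu_t(x_i)=t^{w_i}x_i$ of $\GL(l)$ that induces $\sigma$, and use the conjugation identity
\[b\cdot J=\lim_{t\to 0}\mu_t\cdot(\mu_t^{-1}b\mu_t)\cdot g\cdot I,\]
together with the observation that, because $w_0>\dots>w_n$, the off-diagonal entries $(\mu_t^{-1}b\mu_t)_{ij}=t^{w_j-w_i}b_{ij}$ with $i>j$ tend to $0$ and $\mu_t^{-1}b\mu_t\to\mathrm{diag}(b)$ as $t\to 0$; this forces $b\cdot J=J$. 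The classical Bayer--Stillman theorem then promotes Borel-fixedness to strong stability in characteristic zero, concluding the argument.

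The main obstacle is precisely bridging the gap between the easy equality $\LT_\sigma(b\cdot g\cdot I)=J$ and the ideal-theoretic equality $b\cdot J=J$, since $\LT_\sigma$ does not commute with the $\GL(l)$-action. The one-parameter Gr\"obner degeneration together with the conjugation squeeze $\mu_t^{-1}B^-\mu_t\to\mathrm{diag}$ is the standard way around this difficulty; the remaining ingredients (Grassmannian semicontinuity, Noetherian reduction to finitely many degrees, and the Bayer--Stillman identification of Borel-fixed monomial ideals with strongly stable ones in characteristic zero) are routine.
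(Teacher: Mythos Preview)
The paper does not prove this theorem; it is stated with a citation to Galligo, so there is no in-paper argument to compare against. Your Stage~1 is essentially the standard Pl\"ucker semicontinuity argument, modulo two slips of phrasing (you want the locus where $M$ \emph{equals} $\LT_\sigma(g(I))_d$, not its complement; and the generic $M$ is the $\sigma$-\emph{maximal} one, so that the ``vanishing of $\sigma$-larger Pl\"ucker coordinates'' becomes an empty condition and the stratum is open).

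Stage~2, however, has a genuine error: you are using the wrong Borel. Your $B^-$, characterized by $b(x_j)=b_{jj}x_j+\sum_{i>j}b_{ij}x_i$, does preserve leading terms termwise, but the identity $b\cdot J=J$ for $b\in B^-$ that you set out to prove is \emph{false}. For example, if $x_0^d\in J_d$ but $x_1^d\notin J_d$ (as happens already for the generic initial ideal of a complete intersection), then $b\in B^-$ with $b(x_0)=x_0+x_1$ sends $x_0^d$ to a polynomial whose expansion contains $x_1^d$, so $b\cdot J\neq J$. The Borel that genuinely fixes $J$ is the opposite one $B^+$, with $b(x_j)=b_{jj}x_j+\sum_{i<j}b_{ij}x_i$, and it is $B^+$-fixedness that Bayer--Stillman identifies with strong stability in characteristic~$0$. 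Your degeneration conventions are also inconsistent with your goal: with $w_0>\dots>w_n$ and $t\to 0$, the flat limit $\lim_{t\to 0}\mu_t\cdot gI$ picks out the monomials of \emph{smallest} weight, hence the $\sigma$-\emph{trailing} terms, not $J=\LT_\sigma(gI)$. If you correct the weight/limit direction so that the limit really is $J$, the conjugation squeeze $\mu_t^{-1}b\mu_t\to\mathrm{diag}(b)$ works for $b\in B^+$ and fails (blows up) for $b\in B^-$, which is why the wrong Borel appeared to fit. Even after these repairs, the step ``$\mu_t^{-1}b\mu_t\to\mathrm{diag}(b)$, hence $\lim_t\mu_t(\mu_t^{-1}b\mu_t)gI=\lim_t\mu_t(\mathrm{diag}(b)\cdot g)I$'' is an honest interchange-of-limits and needs its own justification; the cleaner route is to observe directly that the $\sigma$-maximal wedge monomial in the $\GL$-span of $[I_d]\in\wedge^e S_d$ is a highest-weight line for $B^+$, hence $B^+$-fixed.
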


The strongly stable ideal $J$ given in  the previous theorem is called the \textit{generic initial ideal} with respect to $\sigma$ of $I$ and it is denoted by $\gin_\sigma(I)$.  
In particular, when $\sigma$ is the degree reverse lexicographic order, the ideal $\gin_\sigma(I)$ is simply denoted by $\rgin(I)$.

It is possible to deduce much information of an ideal from its generic initial ideal. In particular, given an homogeneous ideal $I$, we are interested in the following three aspects.

\begin{Remark}\label{rem:rginsameHFasideal} 
Let $I$ be a homogeneous ideal of $S$, then:
\begin{description}
\item[a)] The Hilbert function of $S/I$ coincides with the one of $S/\rgin(I)$.
\item [b)] Denote by $\reg(I)$ the Castelnuovo-Mumford regularity of $I$. Then $\reg(I)=\reg(\rgin(I))$. 
Moreover, if $I$ is a strongly stable ideal, then $\reg(I)$ is the highest degree of a minimal generator of $I$ (see \cite{bayer1987criterion}).
\item[c)] Let $I^{\sat}$ denote the saturation of the ideal $I$. Then $\rgin(I^{\sat})=\rgin(I)_{x_n\to0}$. This shows that if $I$ is saturated, then $\rgin(I)$ has no minimal generators involving $x_n$ (see \cite{bayer1987criterion}).
\end{description}
\end{Remark}

Finally, it can be proven that the saturation of an ideal ``commutes'' with the generic initial ideal construction.
\begin{Lemma}\label{lemma:cginsatcommute} Let $I$ be a homogeneous ideal of $S$. Then $\rgin(I^{\sat})=\rgin(I)^{\sat}$.
\end{Lemma}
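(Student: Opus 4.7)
The plan is to apply Remark \ref{rem:rginsameHFasideal}(c) together with the fact that $\rgin(I)$ is itself strongly stable, reducing the lemma to a classical identity for Borel-fixed monomial ideals.

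First, I would recall that Theorem \ref{thm:Galligo} guarantees that $\rgin(I)$ is strongly stable, hence Borel-fixed. A standard property of Borel-fixed monomial ideals $J\subset S$ in characteristic $0$ is that their associated primes all have the form $(x_0,\dots,x_i)$ for some $i\le n$; in particular, the only variable that can appear in an embedded or irrelevant associated prime is $x_n$. Consequently,
$$J^{\sat}=J:x_n^{\infty},$$
and explicitly this monomial ideal is obtained by stripping every power of $x_n$ from each minimal generator of $J$. This is precisely the operation $J_{x_n\to 0}$ used in the statement of Remark \ref{rem:rginsameHFasideal}(c).

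Applying this fact to $J=\rgin(I)$ yields $\rgin(I)^{\sat}=\rgin(I)_{x_n\to 0}$. On the other hand, Remark \ref{rem:rginsameHFasideal}(c) applied directly to $I$ gives $\rgin(I^{\sat})=\rgin(I)_{x_n\to 0}$. Chaining the two identities yields
$$\rgin(I^{\sat})=\rgin(I)_{x_n\to 0}=\rgin(I)^{\sat},$$
which is the desired equality.

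The only step requiring a moment of care is the identification $J^{\sat}=J:x_n^{\infty}$ for a strongly stable ideal $J$, and this follows from the explicit description of the minimal primary decomposition of Borel-fixed monomial ideals (see Bayer--Stillman \cite{bayer1987criterion}). Beyond this classical fact, the proof is an immediate combination of two applications of Remark \ref{rem:rginsameHFasideal}(c), so no genuine obstacle is expected.
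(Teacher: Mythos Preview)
Your argument is correct and follows the same route as the paper's: both hinge on Remark~\ref{rem:rginsameHFasideal}(c) together with the identity $J^{\sat}=J:x_n^{\infty}$ for a strongly stable monomial ideal $J$. The only difference is packaging---the paper verifies this last identity by writing out the two inclusions $\rgin(I^{\sat})\subseteq\rgin(I)^{\sat}$ and $\rgin(I)^{\sat}\subseteq\rgin(I^{\sat})$ via direct monomial manipulations (using strong stability for the first and the trivial containment $J^{\sat}\subseteq J:x_n^\infty$ for the second), whereas you invoke it as a known fact about Borel-fixed ideals.
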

\begin{proof} Let $t$ be a minimal generator of $\rgin(I^{\sat})$. If $t\in\rgin(I)$, then clearly $t\in \rgin(I)^{\sat}$.\\
If $t\notin\rgin(I)$, then there exists $\alpha\ge1$ such that $tx_n^\alpha\in\rgin(I)$. Since $\rgin(I)$ is a strongly stable ideal, then $tx_i^\alpha\in\rgin(I)$ for all $i=0,\dots, n$, and hence $t\in\rgin(I)^{\sat}$.\\ 
This shows that $\rgin(I^{\sat})\subseteq\rgin(I)^{\sat}$.

On the other hand let $t$ be a minimal generator of $\rgin(I)^{\sat}$. By definition, for each $i=0,\dots, n$, there exists $\alpha_i\ge0$ such that $tx_i^{\alpha_i}\in\rgin(I)$. In particular, $tx_n^{\alpha_n}\in\rgin(I)$ and hence $t\in\rgin(I^{\sat})$. This shows that $\rgin(I^{\sat})\supseteq\rgin(I)^{\sat}$.
\end{proof}

\subsection{Lefschetz properties}
Let $R$ be a graded ring over $\KK$, and $R = \bigoplus_{i\geq 0} R_i$ its decomposition into homogeneous
components with $\dim_\KK (R_i) < \infty$.
\begin{enumerate}
\item The graded ring $R$ is said to have the \textit{weak Lefschetz property (WLP)}, 
if there exists an element $\ell \in R_1$ such that
the multiplication map
\begin{align*}
\times \ell \colon R_i &\rightarrow R_{i+1}\\ 
f &\mapsto \ell \cdot f
\end{align*}
has full-rank for every $i \geq 0$. In this case, $\ell$ is called a \textit{weak Lefschetz element}.
\item The graded ring $R$ is said to have the \textit{strong Lefschetz property (SLP)}, 
if there exists an element $\ell \in R_1$ such that
the multiplication map
\begin{align*}
\times \ell^s \colon R_i &\rightarrow R_{i+s}\\ 
f &\mapsto \ell^sf
\end{align*}
has full-rank for every $i \geq 0$ and $s\geq 1$. In this case, $\ell$ is called a \textit{strong Lefschetz element}.
\end{enumerate}

In \cite{palezzato2020lefschetz}, the authors showed that to check if a quotient algebra has the SLP or the WLP, it is enough to check the quotient by strongly stable ideals.

\begin{Proposition}\label{prop:ILPginLP}
Let $I$ be a homogeneous ideal of $S$.
Then the graded ring $S/I$ has the SLP (respectively the WLP) if and only if $S/\rgin(I)$ 
has the SLP (respectively the WLP) with Lefschetz element $x_n$.
\end{Proposition}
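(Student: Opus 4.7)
The plan is to reduce rank computations on $S/I$ to rank computations on $S/\rgin(I)$ via a generic change of coordinates, using the classical commutation identity $\LT_\sigma(J:x_n^s)=\LT_\sigma(J):x_n^s$ which is specific to the degree-reverse-lexicographic order.

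First I would observe that being a weak (resp.\ strong) Lefschetz element for $S/I$ is cut out by the non-vanishing of finitely many maximal-minor conditions on the coefficients of $\ell\in S_1$: only finitely many degrees $i$ (and exponents $s$) are relevant, because of the finite Castelnuovo--Mumford regularity. Hence the set of such elements is either empty or a nonempty Zariski-open subset $V\subseteq S_1$. In the $(\Rightarrow)$ direction I would then pick $g\in\GL(n+1)$ in the intersection of Galligo's open set from Theorem~\ref{thm:Galligo} with the nonempty open locus $\{g:g^{-1}(x_n)\in V\}$; for such $g$ one has simultaneously $\LT_\sigma(g(I))=\rgin(I)$ and $x_n$ is a Lefschetz element for $S/g(I)\cong S/I$.

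The central step is the identity
\[
\dim_\KK(g(I):x_n^s)_i=\dim_\KK(\rgin(I):x_n^s)_i
\]
for every $i$ and every $s\geq 1$. This is obtained by applying the degrevlex commutation $\LT_\sigma(J:x_n^s)=\LT_\sigma(J):x_n^s$ (a standard consequence of $x_n$ being the smallest variable, see for instance \cite{bayer1987criterion}) to $J=g(I)$, together with the equality of Hilbert functions of an ideal and its initial ideal. Combining this with the elementary formula
\[
\mathrm{rank}\bigl(\times \ell^s\colon (S/J)_i\to (S/J)_{i+s}\bigr)=\dim_\KK S_i-\dim_\KK(J:\ell^s)_i
\]
forces the multiplication maps $\times x_n^s$ on $S/g(I)$ and on $S/\rgin(I)$ to have the same rank in every degree. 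Since $S/I\cong S/g(I)$ and Remark~\ref{rem:rginsameHFasideal}(a) gives that $S/I$ and $S/\rgin(I)$ share the same Hilbert function, ``full rank'' means the same thing on both sides, producing both implications at once; the WLP case is merely the specialization $s=1$.

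The only non-formal ingredient is the commutation $\LT_\sigma(J:x_n^s)=\LT_\sigma(J):x_n^s$ for the degrevlex order; everything else is bookkeeping in Hilbert functions and open conditions in $\GL(n+1)$. This is also the conceptual reason why the statement is formulated for $\rgin$ and for $x_n$ specifically rather than for an arbitrary generic initial ideal and an arbitrary variable.
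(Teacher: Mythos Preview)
The paper does not supply its own proof of this proposition: it is quoted verbatim from \cite{palezzato2020lefschetz} and used as a black box. So there is nothing in the present paper to compare your argument against.

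That said, your proof is correct and is in fact the standard argument (this is essentially how the result is proved in the literature, going back to Wiebe and Conca). The identification of the key mechanism---the degrevlex identity $\LT_\sigma(J:x_n^s)=\LT_\sigma(J):x_n^s$, valid for \emph{any} homogeneous $J$ because $x_n$ dividing the leading term of a homogeneous form in degrevlex forces $x_n$ to divide every term---is exactly right, and the rank formula and change-of-coordinates bookkeeping are handled correctly.

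One small point. Your opening claim that ``only finitely many degrees $i$ (and exponents $s$) are relevant, because of the finite Castelnuovo--Mumford regularity'' is true but not entirely self-evident in the non-Artinian setting; it is essentially the content of Theorem~\ref{theo:nonartintoartin}. You can avoid invoking it altogether: your central computation already shows that for every $g$ in Galligo's open set $U$ and every pair $(i,s)$, the rank of $\times x_n^s$ on $(S/\rgin(I))_i$ equals the rank of $\times(g^{-1}(x_n))^s$ on $(S/I)_i$. Since $\{g^{-1}(x_n):g\in U\}$ is open and dense in $S_1\setminus\{0\}$ and rank is lower semicontinuous, this common value is the \emph{maximal} rank of $\times\ell^s$ on $(S/I)_i$ over all $\ell$. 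Both implications then drop out immediately, with no need to know that the Lefschetz locus is cut out by finitely many conditions.
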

As an immediate consequence we have the following result.
\begin{Corollary}\label{corol:SLPsaturideal} Let $I$ be a saturated ideal of $S$.
Then the graded ring $S/I$ has the SLP and it has an increasing Hilbert function. 
\end{Corollary}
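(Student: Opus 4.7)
The plan is to chain together the three preliminary observations that have just been established: Proposition~\ref{prop:ILPginLP}, Remark~\ref{rem:rginsameHFasideal}(a) and (c), and the elementary fact that a monomial quotient has $x_n$ as a non-zero-divisor precisely when no minimal generator involves $x_n$.

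First, by Proposition~\ref{prop:ILPginLP}, it suffices to show that $S/\rgin(I)$ has the SLP with Lefschetz element $x_n$. Since $I$ is saturated, Remark~\ref{rem:rginsameHFasideal}(c) yields that $\rgin(I)$ has no minimal generators involving $x_n$. Because $\rgin(I)$ is a monomial ideal, this is equivalent to saying that $x_n$ is a non-zero-divisor on $S/\rgin(I)$.

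Next, I would use this to handle both conclusions simultaneously. For every $i\ge0$ and $s\ge1$, the multiplication map
\[
\times\, x_n^s\colon (S/\rgin(I))_i \longrightarrow (S/\rgin(I))_{i+s}
\]
is injective, because $x_n$ (and hence $x_n^s$) is a non-zero-divisor. An injective linear map between finite dimensional vector spaces automatically has full rank, so $x_n$ is indeed a strong Lefschetz element for $S/\rgin(I)$. This gives the SLP of $S/I$. Moreover, injectivity forces
\[
\HF(S/\rgin(I), i) \;\le\; \HF(S/\rgin(I), i+1) \qquad \text{for every } i\ge 0,
\]
and by Remark~\ref{rem:rginsameHFasideal}(a) this is the same as $\HF(S/I,i)\le\HF(S/I,i+1)$, proving that the Hilbert function of $S/I$ is (weakly) increasing.

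There is essentially no genuine obstacle: once the saturation hypothesis is translated into the statement that $x_n$ is a non-zero-divisor on $S/\rgin(I)$, both conclusions fall out of the same injectivity argument. The only subtlety worth flagging is that ``full rank'' for the Lefschetz condition must be read correctly: injectivity of $\times x_n^s$ forces $\dim (S/\rgin(I))_i \le \dim (S/\rgin(I))_{i+s}$, so full rank coincides with injectivity in every degree, and we do not need to separately verify surjectivity anywhere.
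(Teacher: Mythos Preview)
Your proof is correct and follows essentially the same route as the paper's own argument: use Remark~\ref{rem:rginsameHFasideal}(c) to see that $\rgin(I)$ has no minimal generator involving $x_n$, deduce that the maps $\times x_n^s$ on $S/\rgin(I)$ are injective for all $i\ge0$ and $s\ge1$, and then invoke Proposition~\ref{prop:ILPginLP} and Remark~\ref{rem:rginsameHFasideal}(a) to transfer both the SLP and the increasing Hilbert function back to $S/I$. The only cosmetic difference is that you phrase the key step as ``$x_n$ is a non-zero-divisor on $S/\rgin(I)$'', whereas the paper jumps directly to the injectivity of the multiplication maps.
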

\begin{proof} By Remark~\ref{rem:rginsameHFasideal}, item (c), $\rgin(I)$ has no minimal generators involving $x_n$. This implies that the map $\times x_n^s\colon (S/\rgin(I))_i \to (S/\rgin(I))_{i+s}$ is injective for every $i \geq 0$ and $s\geq 1$. This implies that $S/\rgin(I)$ 
has the SLP with Lefschetz element $x_n$. By Proposition~\ref{prop:ILPginLP}, $S/I$ has that SLP. The second part of the statement follows from Remark~\ref{rem:rginsameHFasideal} and the fact that the map $\times x_n^s\colon (S/\rgin(I))_i \to (S/\rgin(I))_{i+s}$ is injective for every $i \geq 0$ and $s\geq 1$.
\end{proof}

In principle, in order to understand if a ring has the WLP, one has to check an infinite number of multiplication maps. However, the following result from \cite{palezzato2020klef} shows that if we are interested in the WLP or SLP, we can always reconduct to the Artinian case.   

\begin{Theorem}\label{theo:nonartintoartin}
Let $I$ be a homogeneous ideal of $S$. Then the following facts are equivalent
\begin{enumerate}
\item the graded ring $S/I$ has the SLP (respectively the WLP).
\item the graded Artinian ring $S/J$ has the SLP  (respectively the WLP), where $J=I+(x_0, \dots, x_n)^{\reg(I)+1}$.
\end{enumerate}
\end{Theorem}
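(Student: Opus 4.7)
The plan is to use Proposition~\ref{prop:ILPginLP} to reduce both sides of the equivalence to a question about $\rgin(I)$ and $\rgin(J)$ with Lefschetz element $x_n$. Setting $r = \reg(I) = \reg(\rgin(I))$ (Remark~\ref{rem:rginsameHFasideal}(b)) and $\mathfrak{m} = (x_0, \ldots, x_n)$, I would first prove that $\rgin(J) = \rgin(I) + \mathfrak{m}^{r+1}$. The inclusion $\supseteq$ is immediate since $\mathfrak{m}^{r+1}$ is $\GL$-invariant, and equality follows by comparing Hilbert functions degree-by-degree: both sides coincide with $\rgin(I)_d$ in degree $d \leq r$ and fill $S_d$ in degree $d \geq r+1$. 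Writing $G = \rgin(I)$ and $G' = G + \mathfrak{m}^{r+1}$, the theorem then reduces to: $S/G$ satisfies WLP (resp.\ SLP) with Lefschetz element $x_n$ if and only if $S/G'$ does.

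Since $(S/G)_d = (S/G')_d$ for $d \leq r$ while $(S/G')_d = 0$ for $d \geq r+1$, the forward implication is obtained by simply restricting the full-rank conditions to the non-trivial range. For the converse, the key ingredient is the lemma that \emph{for a strongly stable ideal $G$ with $\reg(G) = r$, the map $\times x_n \colon (S/G)_d \to (S/G)_{d+1}$ is injective for every $d \geq r$.} I would prove this combinatorially: if $m \notin G$ but $x_n m \in G$, then some generator $g = x_n^a \tilde g$ of $G$ (with $a \geq 1$) satisfies $g/x_n \mid m$, and strongly stable closure produces many elements $x_{i_1} \cdots x_{i_k} x_n^{a-k} \tilde g \in G$ for indices $i_j < n$; a degree count then traps $m$ in the form $x_n^{a-1} \tilde g$, of degree $\leq r - 1$, contradicting $d \geq r$. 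Iterating this lemma, $\times x_n^s$ is injective on $(S/G)_d$ for every $d \geq r$ and $s \geq 1$, and combined with the Hilbert function being non-decreasing beyond $r$ this delivers full rank in that range.

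The remaining range for SLP, namely $d < r$ with $d + s > r$, is handled by factoring $\times x_n^s = \times x_n^{d+s-r} \circ \times x_n^{r-d}$ through $(S/G)_r$: the first factor is full-rank by the hypothesis on $S/G'$ and the second is injective by the lemma. The main obstacle is the sub-case where the first factor is strictly surjective, i.e.\ $\dim(S/G)_d > \dim(S/G)_r$, in which the composition a priori has rank only $\dim(S/G)_r$. To rule out failure, I would establish the following structural observation: if $S/G'$ has WLP and the Hilbert function of $S/G$ strictly decreases at some step $d' \to d'+1$ inside $[0,r]$, then surjectivity of $\times x_n$ at that step forces $x_{n-1}^{d'+1} \in G$, and strong stability propagates this, placing every monomial of $\KK[x_0, \ldots, x_{n-1}]$ of degree $\geq d'+1$ inside $G$; consequently $(S/G)_D$ is spanned by $x_n$-divisible monomials for all $D > d'$, so $\times x_n$ becomes simultaneously injective (by the lemma) and surjective past degree $r$, hence bijective. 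This forces the Hilbert function of $S/G$ to be constant for $d \geq r$, which collapses the pathological sub-case to full rank and completes the converse for both WLP and SLP.
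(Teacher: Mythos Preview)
The paper does not prove this theorem; it is quoted from \cite{palezzato2020klef} without argument, so there is no in-paper proof to compare against. Your proposal therefore has to be assessed on its own merits.

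Your outline is essentially correct. The identification $\rgin(J)=\rgin(I)+\mathfrak{m}^{r+1}$ is right for the reasons you give (the $\GL$-invariance of $\mathfrak{m}^{r+1}$ gives one inclusion, and matching Hilbert functions forces equality). The key lemma---injectivity of $\times x_n$ on $(S/G)_d$ for $d\ge r$ when $G$ is strongly stable with $\reg(G)=r$---is true, but your sketch hides the actual step. The clean argument is: if $m\notin G$ and $x_n m\in G$, pick a minimal generator $g\mid x_n m$; since $g\nmid m$, comparing exponents shows the $x_n$-exponent of $g$ is exactly one more than that of $m$, so $g/x_n\mid m$; if $m\neq g/x_n$ then $m$ contains an extra factor $x_j$ with $j<n$, and strong stability gives $x_j g/x_n\in G$ dividing $m$, a contradiction; hence $m=g/x_n$ has degree $\deg(g)-1\le r-1<d$, impossible. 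Your ``degree count traps $m$'' phrasing obscures this and should be replaced by the explicit divisibility argument.

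The handling of the SLP converse in the mixed range $d<r<d+s$ is the delicate part, and your structural observation is correct: surjectivity of $\times x_n$ at a step $d'\to d'+1$ with $d'+1\le r$ forces every degree-$(d'+1)$ monomial in $x_0,\dots,x_{n-1}$ into $G$ (because $G$ is monomial and the image of $\times x_n$ is spanned by $x_n$-divisible classes), and this propagates upward in degree by multiplication, making $\times x_n$ surjective in all degrees $\ge d'$; combined with injectivity from the lemma for degrees $\ge r$, the Hilbert function stabilises at degree $r$, which closes the case. One small point to make explicit: in the complementary subcase $\dim(S/G)_d\le\dim(S/G)_r$, the factorisation through $(S/G)_r$ is a composition of two injections and $\dim(S/G)_d\le\dim(S/G)_r\le\dim(S/G)_{d+s}$, so full rank is immediate without the structural observation; you only need the observation when the first factor is strictly surjective.
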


We will conclude this part introducing some technical results on the Lefschetz properties of a quotient ring, passing through the saturated ideal.\\

Let $I$ be a homogeneous ideal of $S$. Consider the following short exact sequence of standard graded $S$-modules
\begin{equation}\label{eq:sessatnonsat}
0\to I^{\sat}/I \to S/I\to S/I^{\sat}\to 0. 
\end{equation}

Let $\ell\in S_1$ be a general linear form and $i\ge 0$. From the exact sequence \eqref{eq:sessatnonsat} we obtain the following commutative diagram
\begin{equation}\label{eq:commutdiagrammultlinearform}
\xymatrix{ 0 \ar[r] & (I^{\sat}/I)_i \ar[r]^{\iota_i} \ar[d]^{\times \ell} & (S/I)_i \ar[r]^{\rho_i} \ar[d]^{\times \ell} & (S/I^{\sat})_i \ar[r] \ar[d]^{\times \ell} & 0 \\
0 \ar[r] & (I^{\sat}/I)_{i+1} \ar[r]^{\iota_{i+1}} & (S/I)_{i+1} \ar[r]^{\rho_{i+1}} & (S/I^{\sat})_{i+1} \ar[r]  & 0}
\end{equation}

In the subsequent results, we will describe how the injectivity and the surjectivity of the multiplication map are preserved when relating the quotient ring given by $I$ and its saturation.

\begin{Lemma}\label{theo:relationinject} The multiplication map $\times\ell \colon (S/I)_i\to (S/I)_{i+1}$ is injective if and only if the multiplication map $\times\ell \colon (I^{\sat}/I)_i\to (I^{\sat}/I)_{i+1}$ is injective.
\end{Lemma}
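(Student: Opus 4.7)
The plan is to perform a standard diagram chase on the commutative diagram \eqref{eq:commutdiagrammultlinearform} with exact rows. The key auxiliary ingredient is that the rightmost vertical arrow $\times\ell\colon (S/I^{\sat})_i\to (S/I^{\sat})_{i+1}$ is injective: since $I^{\sat}$ is saturated, Corollary~\ref{corol:SLPsaturideal} guarantees that $S/I^{\sat}$ has the SLP with an increasing Hilbert function, so for a general linear form $\ell$ the multiplication map is injective in every degree.

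For the forward implication, assume that $\times\ell$ is injective on $(S/I)_i$. Given $x \in (I^{\sat}/I)_i$ with $\ell x = 0$, commutativity of the left-hand square transports this equation to $\ell\cdot\iota_i(x)=0$ in $(S/I)_{i+1}$. Injectivity of $\times\ell$ on $(S/I)_i$ forces $\iota_i(x)=0$, and injectivity of $\iota_i$ then yields $x=0$. Notice that this direction does not even require the auxiliary fact about $S/I^{\sat}$.

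For the reverse implication, assume $\times\ell$ is injective on $(I^{\sat}/I)_i$, and let $y\in (S/I)_i$ satisfy $\ell y = 0$. Pushing $y$ forward via $\rho$, the right-hand square gives $\ell\cdot\rho_i(y)=\rho_{i+1}(\ell y)=0$; by the auxiliary injectivity on $S/I^{\sat}$ this forces $\rho_i(y)=0$, so by exactness we can write $y=\iota_i(x)$ for some $x\in (I^{\sat}/I)_i$. Commutativity then gives $\iota_{i+1}(\ell x)=\ell\cdot\iota_i(x)=\ell y=0$, and injectivity of $\iota_{i+1}$ yields $\ell x=0$; the hypothesis finally gives $x=0$, so $y=0$.

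There is no serious obstacle: the argument is purely formal diagram chasing once the injectivity of $\times\ell$ on $S/I^{\sat}$ is in hand, and that is exactly what Corollary~\ref{corol:SLPsaturideal} provides for a general $\ell$.
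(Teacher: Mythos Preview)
Your proof is correct and follows essentially the same approach as the paper: both invoke Corollary~\ref{corol:SLPsaturideal} to obtain injectivity of $\times\ell$ on $S/I^{\sat}$, and then deduce the equivalence from the commutative diagram~\eqref{eq:commutdiagrammultlinearform}. The only difference is that you spell out the diagram chase explicitly, whereas the paper simply appeals to the commutativity of the diagram and leaves the (standard) chase to the reader.
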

\begin{proof} 
Directly from Corollary~\ref{corol:SLPsaturideal}, we have that the multiplication map $$\times\ell \colon (S/I^{\sat})_i\to (S/I^{\sat})_{i+1}$$ is injective for any $i\ge 0$.
Finally, because of the commutativity of diagram~\eqref{eq:commutdiagrammultlinearform}, we obtain as well that the multiplication map $$\times\ell \colon (S/I)_i\to (S/I)_{i+1}$$ is injective if and only if the multiplication map $$\times\ell \colon (I^{\sat}/I)_i\to (I^{\sat}/I)_{i+1}$$ is also injective.

\end{proof}

\begin{Lemma}\label{theo:relationsurj} The multiplication map $\times\ell \colon (S/I)_i\to (S/I)_{i+1}$ is surjective if and only if the maps $\times\ell \colon (I^{\sat}/I)_i\to (I^{\sat}/I)_{i+1}$ and $\times\ell \colon (S/I^{\sat})_i\to (S/I^{\sat})_{i+1}$ are surjective.
\end{Lemma}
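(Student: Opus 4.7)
The plan is to run the snake lemma on the commutative diagram~\eqref{eq:commutdiagrammultlinearform}, and then use the injectivity already established for the saturated quotient to collapse the resulting six-term exact sequence to the desired statement.

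First, I would apply the snake lemma to the diagram~\eqref{eq:commutdiagrammultlinearform} in degree $i$, which yields the six-term exact sequence
\begin{equation*}
0\to \Ker(\times\ell)_{I^{\sat}/I}\to \Ker(\times\ell)_{S/I}\to \Ker(\times\ell)_{S/I^{\sat}}\to C_{I^{\sat}/I}\to C_{S/I}\to C_{S/I^{\sat}}\to 0,
\end{equation*}
where $C_M$ denotes the cokernel of the multiplication map $\times\ell$ from $M_i$ to $M_{i+1}$ on the corresponding module $M$. Next, I would invoke Corollary~\ref{corol:SLPsaturideal}: since $I^{\sat}$ is saturated, $S/I^{\sat}$ has the SLP with Lefschetz element $\ell$, so $\times\ell \colon (S/I^{\sat})_i\to (S/I^{\sat})_{i+1}$ is injective for every $i\ge 0$. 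Hence $\Ker(\times\ell)_{S/I^{\sat}}=0$, and the six-term exact sequence cuts into the short exact sequence
\begin{equation*}
0\to C_{I^{\sat}/I}\to C_{S/I}\to C_{S/I^{\sat}}\to 0.
\end{equation*}

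From this short exact sequence of cokernels, the equivalence is immediate: $C_{S/I}=0$ if and only if both $C_{I^{\sat}/I}=0$ and $C_{S/I^{\sat}}=0$, which is precisely the claim. If one prefers to avoid invoking the snake lemma explicitly, both directions can instead be obtained by elementary diagram chasing on~\eqref{eq:commutdiagrammultlinearform}: the implication $(\Rightarrow)$ for the quotient $S/I^{\sat}$ follows at once from the surjectivity of $\rho_{i+1}$, while the surjectivity on $I^{\sat}/I$ is obtained by lifting an element of $(I^{\sat}/I)_{i+1}$ through $\iota_{i+1}$ to $(S/I)_{i+1}$, pulling it back via $\times\ell$, and using exactness of the top row together with the injectivity of $\times\ell$ on $S/I^{\sat}$ provided by Corollary~\ref{corol:SLPsaturideal}.

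I do not expect any serious obstacle: the content of the lemma is entirely formal once Corollary~\ref{corol:SLPsaturideal} is in hand, the only delicate point being to remember that the injectivity of $\times\ell$ on $S/I^{\sat}$ is what forces the connecting map in the snake sequence to vanish and thus cleanly separates the cokernels.
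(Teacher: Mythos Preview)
Your proof is correct. The snake lemma applied to diagram~\eqref{eq:commutdiagrammultlinearform}, together with the vanishing of $\Ker(\times\ell)_{S/I^{\sat}}$ from Corollary~\ref{corol:SLPsaturideal}, yields precisely the short exact sequence of cokernels you describe, and the equivalence follows at once. The only cosmetic point is that Corollary~\ref{corol:SLPsaturideal} asserts the existence of a Lefschetz element rather than naming $\ell$ specifically; but since $\ell$ is taken to be general and the locus of Lefschetz elements is a non-empty Zariski open set, this is not an issue.

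Your route is genuinely different from the paper's. Both arguments handle the easy implications (surjectivity of the outer maps implies surjectivity of the middle, and surjectivity of the middle implies surjectivity on $S/I^{\sat}$) by direct diagram inspection. For the remaining implication---surjectivity on $S/I$ forces surjectivity on $I^{\sat}/I$---the paper passes to generic initial ideals: it invokes Proposition~\ref{prop:ILPginLP} to replace $\ell$ by $x_n$ and $I$ by $\rgin(I)$, observes that surjectivity of $\times x_n$ on $(S/\rgin(I))_i$ forces $(x_0,\dots,x_{n-1})^{i+1}\subseteq\rgin(I)$, and then uses that $\rgin(I^{\sat})$ has no minimal generator divisible by $x_n$ (Remark~\ref{rem:rginsameHFasideal}(c)) to conclude. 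Your argument bypasses this entirely by staying on the level of the original ideal and letting the snake lemma do the work: the key observation that the connecting homomorphism vanishes is supplied by the injectivity of $\times\ell$ on $S/I^{\sat}$. Your approach is shorter and more structural; the paper's approach, while longer, extracts along the way an explicit combinatorial consequence (the containment $(x_0,\dots,x_{n-1})^{i+1}\subseteq\rgin(I)$) that may be of independent use.
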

\begin{proof} 
Directly form the commutativity of diagram~\eqref{eq:commutdiagrammultlinearform}, we have that if the multiplication maps defined by 
\begin{equation}\label{surj-sat}
\times\ell \colon (I^{\sat}/I)_i\to (I^{\sat}/I)_{i+1}
\end{equation}
and
\begin{equation}\label{surj-satquotient}
\times\ell \colon (S/I^{\sat})_i\to (S/I^{\sat})_{i+1}
\end{equation}
are surjective, then 
\begin{equation}\label{surj-quotient}
\times\ell \colon (S/I)_i\to (S/I)_{i+1}
\end{equation}
is surjective as well.\\ 

Let us now assume that the multiplication given in (\ref{surj-quotient}) is surjective.
Again by the commutativity of diagram~\eqref{eq:commutdiagrammultlinearform}, we obtain that the surjectivity of (\ref{surj-satquotient}).\\
Recall that, by Proposition~\ref{prop:ILPginLP}, the multiplication map in diagram (\ref{surj-quotient}) is surjective if and only if the
\begin{equation}\label{surj-ginquotient}
\times x_n \colon (S/\rgin(I))_i\to (S/\rgin(I))_{i+1}
\end{equation}
is surjective as well.\\
Analogously, it is possible to prove that the multiplication in (\ref{surj-sat}) is surjective if and only if the map 
\begin{equation}\label{surj-ginsat}
\times x_n \colon (\rgin(I^{\sat})/\rgin(I))_i\to (\rgin(I^{\sat})/\rgin(I))_{i+1}
\end{equation}
 is also surjective. Therefore, it is sufficient to prove that the surjectivity of the multiplication map described in diagram (\ref{surj-ginquotient}) implies the surjectivity of the multiplication in diagram (\ref{surj-ginsat}).
 Suppose the first one to be surjective. Then $(x_0,\dots, x_{n-1})^{i+1}\subseteq\rgin(I)_{i+1}$ and hence $(\rgin(I^{\sat})/\rgin(I))_{i+1}$ is generated only by monomials divisible by $x_n$. However, by Remark~\ref{rem:rginsameHFasideal}, item c), the ideal $\rgin(I^{\sat})$ has no minimal generators divisible by $x_n$ and, therefore, the multiplication described in diagram (\ref{surj-ginsat}) is also surjective.


\end{proof}

\section{Higher dimensional ideals}

In this paper, we are interested in the study of non-Artinian  quotient rings, i.e. quotients of dimension greater or equal to $1$. To achieve our goal, we will first analyze the case of quotient rings with dimension bigger than $2$ and then the case of quotient rings with dimension $1$ that are almost complete intersections.

\subsection{Dimension bigger than $2$} Let us first consider the case of ideals such that the dimension of the quotient is bigger than $2$.
\begin{Theorem}\label{theo:bigequivbigdimSLP} Let $n\ge2$ and $I$ a homogeneous ideal of $S=\KK[x_0,\dots,x_n]$. Assume that $\dim(S/I)\ge 2$. Then the following facts are equivalent
\begin{enumerate}
\item $S/I$ has the SLP,
\item $\projdim(S/I)\le n$,
\item $\depth(S/I)\ge 1$,
\item $\mathfrak{m}\notin\Ass(S/I)$,
\item $I$ is saturated.
\end{enumerate}
Moreover, if $S$ has standard grading, then the previous properties are equivalent also to the following:
\begin{enumerate}
\item[(a)] $S/I$ has the WLP.
\end{enumerate}
\end{Theorem}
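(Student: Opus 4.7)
The plan is to split the equivalences into two blocks. The ring-theoretic chain (2)--(5) follows from standard commutative algebra and is essentially independent of Section 2, while the Lefschetz chain (5)$\Rightarrow$(1)$\Rightarrow$(a) comes almost for free from the preliminaries. The only real content therefore sits in (a)$\Rightarrow$(5), which I would obtain via the generic initial ideal machinery.

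For (2)--(5) I would chain three standard facts. Auslander--Buchsbaum gives $\projdim(S/I)+\depth(S/I)=n+1$, whence (2)$\Leftrightarrow$(3). Graded prime avoidance applied to $\Ass(S/I)$ gives $\depth(S/I)\ge 1 \Leftrightarrow \mathfrak{m}\notin\Ass(S/I)$, whence (3)$\Leftrightarrow$(4). And the identification $I^{\sat}/I = H^0_\mathfrak{m}(S/I)$, which is nonzero iff $\mathfrak{m}$ is an associated prime of $S/I$, gives (4)$\Leftrightarrow$(5). Then (5)$\Rightarrow$(1) is exactly Corollary \ref{corol:SLPsaturideal}, and (1)$\Rightarrow$(a) is immediate.

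For (a)$\Rightarrow$(5), my plan is as follows. By Proposition \ref{prop:ILPginLP}, the WLP of $S/I$ promotes to WLP of $S/J$ with Lefschetz element $x_n$, where $J:=\rgin(I)$. I will show that $J$ has no minimal generator involving $x_n$; once this is in hand, Remark \ref{rem:rginsameHFasideal}(c) and Lemma \ref{lemma:cginsatcommute} give $\rgin(I^{\sat})=\rgin(I)$, and Hilbert-function invariance combined with $I\subseteq I^{\sat}$ forces $I=I^{\sat}$. Suppose for contradiction that $g=x_n g'$ is a minimal generator of $J$ of degree $d$. Minimality gives $g'\notin J$, so $g'$ lies nontrivially in the kernel of $\times x_n\colon (S/J)_{d-1}\to (S/J)_d$, killing injectivity; WLP then forces this map to be surjective.

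The key observation will be a short monomial argument: since $\times x_n$ sends every basis monomial of $(S/J)_{d-1}$ to a monomial divisible by $x_n$, a basis monomial of $(S/J)_d$ not divisible by $x_n$ can never lie in the image. Hence surjectivity is equivalent to $(x_0,\dots,x_{n-1})^d\subseteq J$. In particular $x_{n-1}^d\in J$, and strong stability of $J$ propagates this to $x_i^d\in J$ for every $i\le n-1$, so $(x_0,\dots,x_{n-1})\subseteq\sqrt{J}$ and $\dim(S/J)\le 1$, contradicting $\dim(S/I)=\dim(S/J)\ge 2$. The main obstacle I anticipate is spotting this monomial observation that converts surjectivity of $\times x_n$ into the ideal-theoretic containment $(x_0,\dots,x_{n-1})^d\subseteq J$; once it is in hand, the strongly stable structure of $\rgin(I)$ does the rest.
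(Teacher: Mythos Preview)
Your proof is correct and follows the same overall architecture as the paper: reduce to $J=\rgin(I)$ via Proposition~\ref{prop:ILPginLP} and show that $J$ has no minimal generator divisible by $x_n$, which is equivalent to saturation/depth $\ge 1$. The one substantive difference is in how this last claim is extracted from the WLP. The paper invokes \cite[Propositions~2.10 and 2.12]{palezzato2020lefschetz} to conclude that the Hilbert function of $S/J$ is unimodal; since $\dim(S/J)\ge 2$ forces a strictly increasing tail, unimodality makes the Hilbert function increasing throughout, so $\times x_n$ is injective in every degree and no minimal generator of $J$ can involve $x_n$. You instead localize the argument at a single degree: a bad generator kills injectivity there, WLP forces surjectivity, and your monomial observation converts surjectivity into $(x_0,\dots,x_{n-1})^d\subseteq J$ (the same observation the paper exploits in the proof of Lemma~\ref{theo:relationsurj}), yielding $\dim(S/J)\le 1$. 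Your route is more self-contained, avoiding the external unimodality citation; the paper's route gives the marginally stronger byproduct that $\times x_n$ is injective in \emph{every} degree, though this is not needed here. As a minor remark, once you have $(x_0,\dots,x_{n-1})^d\subseteq J$ the appeal to strong stability is redundant, since that containment already gives $x_i^d\in J$ for all $i\le n-1$.
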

\begin{proof}
Since facts (2), (3), (4) and (5) are known to be equivalent, we just need to prove that (1) and (2) are equivalent.

Assume that $S/I$ has the SLP. By Proposition~\ref{prop:ILPginLP}, $S/\rgin(I)$ has the SLP with Lefschetz element $x_n$. By \cite[Proposition 2.12]{palezzato2020lefschetz}, the Hilbert function of $S/\rgin(I)$ is unimodal. However, since $\dim(S/\rgin(I))=\dim(S/I)\ge 2$, then the tail of the Hilbert function is strictly increasing, and hence the Hilbert function of $S/\rgin(I)$ is an increasing function. This implies that the multiplication map $\times x_n\colon (S/\rgin(I))_i\to (S/\rgin(I))_{i+1}$ is injective for any $i\ge0$. As a consequence $\rgin(I)$ has no minimal generators divisible by $x_n$, and hence $\projdim(S/I)=\projdim(S/\rgin(I))\le n$. This shows that (1) implies (2).

Assume now that $\projdim(S/I)\le n$. By \cite[Theorem A1.9]{eisenbudsyzygy}, $H^0_{\mathfrak{m}}(S/I)\cong \Ext^{n+1}(S/I,S(-n-1))$.  Since $\projdim(S/I)\le n$, we have $\Ext^{n+1}(S/I,S(-n-1))=0$, and hence $I^{\sat}/I\cong H^0_{\mathfrak{m}}(S/I)=0$. This implies that $I$ is a saturated ideal. We then conclude by Corollary~\ref{corol:SLPsaturideal}, and hence (2) implies (1).

Assume now that $S$ has standard grading. By Definition of SLP, clearly (1) implies (a). Assume that $S/I$ has the WLP. By Proposition~\ref{prop:ILPginLP}, $S/\rgin(I)$ has the WLP with Lefschetz element $x_n$. By \cite[Proposition 2.10]{palezzato2020lefschetz}, the Hilbert function of $S/\rgin(I)$ is unimodal. Using the same proof as before, we can prove that (a) implies (2).
\end{proof}

\begin{Remark} Notice that if in Theorem~\ref{theo:bigequivbigdimSLP} we do not assume that $\dim(S/I)\ge 2$, then we still obtain that (1) implies (a), and (2) implies (1) and (a) but the opposite implications are false. In fact, it is enough to consider the ideal $I=\ideal{x^2,xy,y^2,xz^2}$ in $S=\KK[x,y,z]$. Then $S/I$ has the WLP with Lefschetz element $z$ but it does not have the SLP. On the other hand, if we consider the ideal $I=\ideal{x^3,x^2y,,xy^3,y^4,xy^2z}$ in $S=\KK[x,y,z]$. Then $S/I$ has the SLP with Lefschetz element $z$ but $\projdim(S/I)=3$.
\end{Remark}

\subsection{Dimension $1$ almost complete intersections}
Let $I$ be the ideal generated by the homogeneous polynomials $f_0, f_1, f_2$ of degree respectively $d_0, d_1, d_2$ in $S=\KK[x_0,x_1,x_2]$. Assume in addition that $\dim(S/I)=1$ or equivalently that $V(I)\subset \PP^2$ is 0-dimensional (the case $\dim(S/I)=0$ is already studied in \cite{Brenner07}). Notice that under our assumptions, we are studying dimension $1$ almost complete intersections.\\

The goal of this section is to prove the following result.
\begin{Theorem}\label{thm-main}
The quotient ring \label{theo:WLP3var1dim} $S/I$ has the WLP.
\end{Theorem}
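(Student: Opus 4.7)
The plan is to exploit the reduction tools from Section 2 via the saturation sequence. First, if $I$ is already saturated then Corollary~\ref{corol:SLPsaturideal} yields the SLP, hence WLP, of $S/I$. So we may assume $I \subsetneq I^{\sat}$ and set $M := I^{\sat}/I$, a nonzero finite-length graded $S$-module (namely $H^0_{\mathfrak{m}}(S/I)$).

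For a general linear form $\ell \in S_1$, Lemmas~\ref{theo:relationinject} and \ref{theo:relationsurj} translate the required full-rank condition for $\times\ell\colon (S/I)_i \to (S/I)_{i+1}$ into conditions on multiplication by $\ell$ on $M$ and on $S/I^{\sat}$. The behavior on $S/I^{\sat}$ is under control: by Corollary~\ref{corol:SLPsaturideal}, $S/I^{\sat}$ has the SLP and its Hilbert function is non-decreasing, stabilizing at the multiplicity $c := \deg V(I^{\sat})$; hence $\times\ell$ is injective on $S/I^{\sat}$ in every degree, and surjective precisely once the Hilbert function has stabilized.

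The crux is the analysis of $\times\ell$ on the finite-length module $M$, and this is where the hypothesis that $I=(f_0,f_1,f_2)$ is a codimension~$2$ almost complete intersection in $\KK[x_0,x_1,x_2]$ is essential. After a generic change of generators we may assume $(f_0,f_1)$ is a regular sequence, so that $A := S/(f_0,f_1)$ is a $1$-dimensional Cohen-Macaulay ring and $I/(f_0,f_1)=(\bar f_2)$ is principal; hence $M\cong (\bar f_2)^{\sat}/(\bar f_2)$ in $A$. Using the link $(f_0,f_1):f_2$ one expects to describe $M$ as a cyclic $S$-module, $M \cong (S/L)(-a)$, with $L$ an $\mathfrak{m}$-primary complete intersection arising from a further colon of the linked ideal. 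By Stanley's theorem~\cite{Stanley80} (see also \cite{Reid91}), $S/L$ then has the SLP, and consequently $\times\ell$ has full rank on every graded piece of $M$.

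Finally we assemble the ingredients degree by degree. Where the Hilbert function of $S/I$ is non-decreasing, the injectivity of $\times\ell$ is equivalent via Lemma~\ref{theo:relationinject} to injectivity on $M$, which is given. Where it is non-increasing, this can only occur once the Hilbert function of $S/I^{\sat}$ has stabilized at $c$ (because that function is itself non-decreasing), and the required surjectivity then follows via Lemma~\ref{theo:relationsurj} from the simultaneous surjectivity on $M$ and on $S/I^{\sat}$. The main obstacle is the structural step of pinning down the cyclic form of $M$ and the shape of its annihilator $L$, and it is precisely here that the combination of three generators and three variables plays a decisive role.
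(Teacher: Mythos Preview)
Your overall reduction matches the paper's: split along the saturation sequence~\eqref{eq:sessatnonsat}, handle $S/I^{\sat}$ via Corollary~\ref{corol:SLPsaturideal}, and focus on $\times\ell$ acting on the finite-length module $M=I^{\sat}/I$.

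The gap is the structural claim about $M$. In general $M$ is \emph{not} cyclic, hence cannot be written as $(S/L)(-a)$ with $L$ an Artinian complete intersection. Under the identification~\eqref{eqidenlocalh1syz} one has $M=\bigoplus_i H^1(\T(i))$, the first cohomology module of the rank~$2$ syzygy bundle $\T$ on $\PP^2$, and such modules can require many minimal generators. Concretely, when $\T$ is stable with normalised first Chern class zero, one computes $h^1(\T(-1))=h^1(\T(-2))=c_2(\T)$; as soon as $c_2(\T)\ge 2$ the lowest nonzero graded piece of $M$ already has dimension $\ge 2$, so $M$ is not cyclic. Your liaison step correctly presents $(\bar f_2)\cong (S/J)(-d_2)$ with $J=(f_0,f_1):f_2$, but nothing forces the quotient $(\bar f_2)^{\sat}/(\bar f_2)$ to be principal over $S$, and Stanley's theorem is therefore not available.

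The paper treats $M$ by an entirely different argument. It restricts $\T$ to a general line $L$, determines the splitting type $\T|_L$ case by case---via the destabilising line subbundle when $\T$ is unstable, and via Grauert--M\"ulich when $\T$ is semistable or stable---and reads off from the long exact sequence of $0\to\T(i)\to\T(i+1)\to\T|_L(i+1)\to 0$ the precise range of injectivity and surjectivity of $\varphi_i$ (Proposition~\ref{prop-bounddeg}). Corollary~\ref{theo:leftcolum3var} then checks that this threshold is exactly $\deg(F)$, the same degree at which $S/I^{\sat}$ stabilises (Lemma~\ref{lemma:rightcolumn}), so the two pieces dovetail and the assembly is immediate. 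Note that even if $M$ did have the SLP, your final assembly would still need this synchronisation: knowing only that $\times\ell$ has full rank on $M$ does not guarantee that the degrees where $\times\ell$ fails to be injective on $M$ lie inside the range $i\ge\deg(F)$ where $\times\ell$ is surjective on $S/I^{\sat}$.
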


As observed in \cite{dimcapoplefs}, we can describe the Hilbert-Poincar\'e series of $S/I^{\sat}$ as
$$HS_{S/I^{\sat}}(t)=F/(1-t),$$
where $F\in\mathbb{Z}[t]$ and $F(1)\ne0$. Moreover, $\deg(F)$ is the minimal degree $p$ for which $\dim_{\KK}((S/I^{\sat})_k)=F(1)$ for all $k\ge p$.
\begin{Remark} Since $S/I^{\sat}$ is Cohen-Macaulay of dimension $1$, we have that $\deg(F)=\reg(I^{\sat})-1$.
\end{Remark}
We will now study the involved multiplication maps. 
\begin{Lemma}\label{lemma:rightcolumn} Let $\ell\in S_1$ be a general linear form. Then the multiplication map 
$$\times\ell \colon (S/I^{\sat})_i\to (S/I^{\sat})_{i+1}$$ 
is injective for any $0\le i\le \deg(F)-1$ and it is bijective for any $i\ge\deg(F)$.
\end{Lemma}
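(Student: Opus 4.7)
The plan is to apply Corollary~\ref{corol:SLPsaturideal} directly to $I^{\sat}$ and then simply compare the source and target dimensions of the multiplication map degree by degree.

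First I would note that, since $I^{\sat}$ is saturated, Corollary~\ref{corol:SLPsaturideal} gives that $S/I^{\sat}$ has the SLP and a non-decreasing Hilbert function. Because the Lefschetz locus is Zariski-open in $S_1$ (and Corollary~\ref{corol:SLPsaturideal} guarantees it is non-empty), a general linear form $\ell \in S_1$ is a Lefschetz element. Consequently, the multiplication map $\times \ell \colon (S/I^{\sat})_i \to (S/I^{\sat})_{i+1}$ has full rank for every $i \ge 0$.

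Next, writing $F(t) = \sum_k a_k t^k$ and expanding $HS_{S/I^{\sat}}(t) = F(t)/(1-t)$, one gets $\dim_{\KK}((S/I^{\sat})_n) = \sum_{k \le n} a_k$. By the description of $\deg(F)$ recalled just before the statement, this dimension equals $F(1)$ exactly when $n \ge \deg(F)$, and in particular $\dim_{\KK}((S/I^{\sat})_{\deg(F)-1}) < F(1)$ by minimality of $\deg(F)$. For $0 \le i \le \deg(F)-1$, monotonicity yields $\dim_{\KK}((S/I^{\sat})_i) \le \dim_{\KK}((S/I^{\sat})_{i+1})$, and combined with full rank this forces $\times\ell$ to be injective. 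For $i \ge \deg(F)$, both source and target have dimension $F(1)$, so full rank is equivalent to bijectivity.

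The only mildly delicate step is passing from the existence of an SL element (which Corollary~\ref{corol:SLPsaturideal} supplies) to the statement that a \emph{general} $\ell \in S_1$ works, but in characteristic zero this is standard by upper semicontinuity of the coranks of the family of maps $\times\ell$ parametrized by $\ell \in S_1$. Thus no computation beyond dimension counting is required, and the two cases $i \le \deg(F)-1$ and $i \ge \deg(F)$ combine to give exactly the claimed statement.
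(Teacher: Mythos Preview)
Your proposal is correct and follows essentially the same route as the paper: invoke Corollary~\ref{corol:SLPsaturideal} to get injectivity of $\times\ell$ in every degree, and then use the constancy of $\dim_\KK((S/I^{\sat})_i)$ for $i\ge\deg(F)$ to upgrade injectivity to bijectivity there. The paper's own proof is terser---it simply states injectivity for all $i\ge 0$ directly from the corollary and then observes the dimensions stabilize---whereas you phrase the same content via ``full rank plus dimension comparison'' and add the (correct, if not strictly needed here) remark that a general $\ell$ inherits the Lefschetz property by semicontinuity.
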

\begin{proof} By Corollary~\ref{corol:SLPsaturideal}, the multiplication map is injective for all $i\ge0$. However, since $\dim_{\KK}((S/I^{\sat})_i)$ is constant for any $i\ge\deg(F)$, this implies that the map is bijective for any $i\ge\deg(F)$.
\end{proof}

In order to study the multiplication map defined by 
$$
\times\ell \colon (I^{\sat}/I)_i\to (I^{\sat}/I)_{i+1}
$$
we will relate it to the cohomology of the syzygy vector bundle of the ideal sheaf.

Indeed, we can consider the short exact sequence
\begin{equation}\label{eqdefsyzygy}
0\to\T\to\bigoplus_{i=0}^2\OO_{\PP^2}(-d_i)\stackrel{M}{\to}\mathcal{I}_\Gamma\to0,
\end{equation}
where  $A = [f_0 \:\: f_1 \:\: f_2 ]$ and $\Gamma$ denotes the zero locus of the ideal $I$. Notice that, under our assumptions, $\T$ is a vector bundle.
Let us denote
$$m(I)=\min\{q\in\mathbb{Z}~|~\T(m(I))\ne0 \text{ and } \T(m(I)-1)=0\}.$$
Observe that $m(I)$ coincides with the minimal degree of a syzygy of the matrix $M$.
Indeed, an element $s\in H^0(\T(m(I)))$ defines the following commutative diagram
$$\xymatrix{ \OO_{\PP^2}(-m(I)) \ar[d]^-{s} \ar[rd]^-{[g_0,g_1,g_2]^t} &  & & \\
\T \ar[r] & \bigoplus_{i=0}^2\OO_{\PP^2}(-d_i) \ar[r]^-{M} & \mathcal{I}_\Gamma \ar[r] & 0}
$$
with the relation $\sum_{i=0}^2f_ig_i=0$ and such that $\deg(g_i)=m(I)-d_i$ for $i=0,1,2$.

As observed in \cite{sernesilocal}, this allows to have an identification
\begin{equation}\label{eqidenlocalh1syz}
(I^{\sat}/I)_i\cong (H^0_{\mathfrak{m}}(S/I))_i\cong H^1(\T(i)),
\end{equation}
that leads to the following equality (see \cite[Corollary 1.4]{dimcapoplefs})
\begin{equation}\label{eq:degFmdr}
\deg(F)= d_0+d_1+d_2-m(I)-2.
\end{equation}

\begin{Proposition}\label{prop-bounddeg}
Let $\ell\in S_1$ be a general linear form. Consider the multiplication map 
\begin{equation}\label{cohom-mult}
\varphi_i \colon H^1(\T(i))\stackrel{\times\ell}{\to} H^1(\T(i+1))
\end{equation}.
Then:
\begin{itemize}
    \item If $\T$ is unstable, then $\varphi_i$ is injective if $i<d_0+d_1+d_2 - m(I) -1$ and it is surjective if $i>m(I)-3$; moreover, these bounds are sharp.
    \item If $\T$ is either stable or semistable, then $\varphi_i$ is injective if $i < \frac{d_0+d_1+d_2-2}{2}$, for $d_0+d_1+d_2$ even, and if $i < \frac{d_0+d_1+d_2-3}{2}$, for $d_0+d_1+d_2$ odd. Moreover, it is sujective if $i > \frac{d_0+d_1+d_2-6}{2}$, for $d_0+d_1+d_2$ even, and if $i > \frac{d_0+d_1+d_2-5}{2}$, for $d_0+d_1+d_2$ odd.
\end{itemize}
\end{Proposition}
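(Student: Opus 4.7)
The approach is to exploit the identification $(I^{\sat}/I)_i \cong H^1(\T(i))$ from \eqref{eqidenlocalh1syz} together with the restriction short exact sequence for a general line $L = V(\ell) \subset \PP^2$:
\begin{equation*}
0 \to \T(i) \xrightarrow{\cdot \ell} \T(i+1) \to \T|_L(i+1) \to 0.
\end{equation*}
The associated long exact sequence in cohomology places $\varphi_i$ between $H^0(\T|_L(i+1))$ and $H^1(\T|_L(i+1))$, so $\varphi_i$ is injective as soon as $H^0(\T(i+1)) \to H^0(\T|_L(i+1))$ is surjective, and surjective as soon as the coboundary $H^1(\T|_L(i+1)) \to H^2(\T(i))$ is injective. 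The plan is then to (i) determine the generic splitting type of $\T|_L$ from the stability of $\T$, (ii) compute the needed cohomology on $\PP^1$, and (iii) trade injectivity for surjectivity via Serre duality on $\PP^2$.

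By Grothendieck, $\T|_L \cong \OO_{\PP^1}(a) \oplus \OO_{\PP^1}(b)$ with $a \ge b$ and $a+b = c_1(\T) = -(d_0+d_1+d_2)$. When $\T$ is semistable (or stable), Grauert-M\"ulich forces $a - b \le 1$, which pins down $(a,b)$ to one of the two cases $a = b = -(d_0+d_1+d_2)/2$ or $(a,b) = (-(d_0+d_1+d_2-1)/2, -(d_0+d_1+d_2+1)/2)$, according to the parity of $d_0+d_1+d_2$. When $\T$ is unstable the maximal destabilizing subsheaf is a line bundle $\OO(-m(I)) \hookrightarrow \T$ with cokernel of the form $\mathcal{I}_Z(m(I)-(d_0+d_1+d_2))$ for a zero-dimensional $Z$; restricting this sub-quotient sequence to a line disjoint from $Z$ gives $(a,b) = (-m(I),\, m(I)-(d_0+d_1+d_2))$.

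In each case a direct computation of $h^0$ and $h^1$ of $\OO_{\PP^1}(a+i+1) \oplus \OO_{\PP^1}(b+i+1)$ yields exact bounds for the degrees in which $H^0(\T|_L(i+1))$ or $H^1(\T|_L(i+1))$ vanishes, and in the semistable case these vanishings give both bounds in the statement after splitting on parity. In the unstable case, the naive $H^1$-vanishing only gives surjectivity for $i \ge (d_0+d_1+d_2)-m(I)-2$, weaker than $i > m(I)-3$; to get the claimed range I would use the destabilizing sequence
\begin{equation*}
0 \to \OO(i-m(I)) \to \T(i) \to \mathcal{I}_Z(i+m(I)-(d_0+d_1+d_2)) \to 0
\end{equation*}
together with $H^1(\OO_{\PP^2}(k))=0$ to rewrite $H^1(\T(i)) \cong H^1(\mathcal{I}_Z(i+m(I)-(d_0+d_1+d_2)))$, and then analyze $\varphi_i$ as multiplication by $\ell$ on the Artinian quotient $H^0(\OO_Z)$ modulo the image of the evaluation from $H^0(\OO_{\PP^2}(k))$. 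For the injectivity bounds, I would avoid a separate computation and instead invoke Serre duality on $\PP^2$: since $\T$ has rank $2$, $\T^\vee \cong \T(d_0+d_1+d_2)$, so $H^1(\T(i))^* \cong H^1(\T(d_0+d_1+d_2-i-3))$, and under this pairing $\varphi_i$ is adjoint to $\varphi_{d_0+d_1+d_2-i-4}$. Thus injectivity at degree $i$ is equivalent to surjectivity at degree $d_0+d_1+d_2-i-4$, and substitution converts each surjectivity bound into the claimed injectivity bound.

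The main obstacle is the sharpness claim combined with pushing the unstable surjectivity range past what pure $H^1$-vanishing on the restriction gives: showing that $\varphi_i$ fails surjectivity precisely at $i = m(I)-3$ (equivalently, fails injectivity precisely at $i = d_0+d_1+d_2-m(I)-1$) requires identifying exactly when the coboundary into $H^2(\T(i))$ becomes non-injective. Controlling this map forces one to track the scheme $Z$ appearing in the quotient of the destabilizing sequence and to understand how the general linear form $\ell$ acts on $H^0(\OO_Z)$; this is the step where the argument must be executed most carefully, and also where the hypothesis that $\ell$ is \emph{general} (rather than merely a linear form) is really used.
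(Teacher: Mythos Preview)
Your framework (restriction sequence, generic splitting type, Serre duality swapping injectivity and surjectivity via $\varphi_i \leftrightarrow \varphi_{d_0+d_1+d_2-i-4}$) is exactly the paper's, and in the (semi)stable case your argument is essentially identical: the $H^0$-vanishing of $\T|_L(i+1)$ from the Grauert--M\"ulich splitting gives injectivity, and either direct $H^1$-vanishing on $L$ or Serre duality gives the surjectivity bound---both yield the same inequality.

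The one substantive divergence is in the unstable case, where you have the duality running in the harder direction. You try to establish the sharp \emph{surjectivity} bound $i>m(I)-3$ directly (and correctly observe that naive $H^1$-vanishing on $L$ only gives the weaker $i\ge d_0+d_1+d_2-m(I)-2$), and then propose to analyze $H^1(\mathcal{I}_Z(\cdot))$ and multiplication by $\ell$ on the Artinian piece---an argument you yourself flag as the ``main obstacle'' and leave unfinished. The paper avoids this entirely by proving \emph{injectivity} first, which is clean: from the destabilizing sequence $0\to\OO(-m(I))\to\T\to\mathcal{I}_Y(m(I)-d_0-d_1-d_2)\to 0$ with $Y\neq\emptyset$ one has $H^0(\T(j))\cong H^0(\OO_{\PP^2}(j-m(I)))$ for all $j\le d_0+d_1+d_2-m(I)$, so
\[
h^0(\T(i+1))-h^0(\T(i))=h^0(\OO_L(i+1-m(I)))=h^0(\T|_L(i+1))
\]
whenever $i<d_0+d_1+d_2-m(I)-1$ (the second summand $H^0(\OO_L(i+1+m(I)-d_0-d_1-d_2))$ being zero in that range). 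This dimension count forces $H^0(\T(i+1))\to H^0(\T|_L(i+1))$ to be surjective, hence $\varphi_i$ injective; Serre duality then hands you surjectivity for $i>m(I)-3$ with no further work. Sharpness also drops out: at $i=d_0+d_1+d_2-m(I)-1$ the second summand becomes $H^0(\OO_L(0))\cong\KK$, producing a one-dimensional kernel. So reverse your order in the unstable case and the ``obstacle'' disappears.
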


\begin{proof} If $\T$ is a direct sum of line bundles, then the statement is true because of the vanishing of $H^1(\T(i))$, for any $i\in \mathbb{Z}$. Therefore, from now on, we assume $\T$ to be indecomposable.\\
Let us study first the injectivity of the map (\ref{cohom-mult}).

Firstly, we consider the case that $\T$ is unstable. This implies that
$$-m(I)>(-d_0-d_1-d_2)/2.$$
Consider a generic line $L\subseteq\PP^2$ defined by the linear form $\ell$. Moreover, consider the short exact sequence
$$0\to\OO_{\PP^2}(-m(I))\to\T\to\mathcal{I}_Y(m(I)-d_0-d_1-d_2)\to 0 $$
and its restriction to the line $L$
$$0\to\OO_{L}(-m(I))\to\T_{|L}\to\OO_L(m(I)-d_0-d_1-d_2)\to 0. $$
Since $d_0+d_1+d_2-2m(I)>0$, this implies that
$$\Ext^1(\OO_L(m(I)-d_0-d_1-d_2),\OO_L(-m(I)))\cong H^1(\OO_L(d_0+d_1+d_2-2m(I)))=0.$$
and, therefore, we obtain the splitting, for the generic line $L$,
$$\T_{|L}\cong\OO_L(-m(I))\oplus\OO_L(m(I)-d_0-d_1-d_2).$$
Moreover, being $\T$ indecomposable, we have $Y\ne\emptyset$ and therefore $H^0(\mathcal{I}_Y(i))=0$ for all $i\le0$. This implies that
$H^0(\T(i))\cong H^0(\OO_{\PP^2}(i-m(I)))$ for all $i\le d_0+d_1+d_2-m(I)$.\\
Consider the short exact sequence
$$0\to \T(i)\to T(i+1)\to\OO_L(i+1-m(I))\oplus \OO_L(i+1+m(I)-d_0-d_1-d_2)\to0.$$
and its derived long exact sequence in cohomology
$$0\to H^0(\T(i))\to H^0(T(i+1))\to H^0(\OO_L(i+1-m(I)))\oplus H^0(\OO_L(i+1+m(I)-d_0-d_1-d_2)) $$
$$\to H^1(\T(i))\to H^1(T(i+1))\to \cdots$$
If $i<d_0+d_1+d_2-m(I)-1$, then we have the following vanishing 
$$H^0(\OO_L(i+1+m(I)-d_0-d_1-d_2))=0,$$ and dimension equivalence
$$h^0(\T(i+1))-h^0(\T(i))=h^0(\OO_{\PP^2}(i+1-m(I)))-h^0(\OO_{\PP^2}(i-m(I)))=h^0(\OO_{L}(i+1-m(I))).$$
Therefore, we have that $\varphi_i$ is injective for all $i<d_0+d_1+d_2-m(I)-1$.\\
Observe that, being $\T$ indecomposable, $\varphi_{(d_0+d_1+d_2-m(I)-1)}$ cannot be injective having a $1$-dimensional kernel, hence the bound is sharp.

The map $\varphi_i$ is surjective if and only if we have the short exact sequence in cohomology
$$
0 \to H^1(\OO_L(i+1-m(I)))\oplus H^1(\OO_L(i+1+m(I)-d_0-d_1-d_2)) \to  H^2(\T(i))\to H^2(T(i+1)) \to 0,
$$
which, by Serre duality, is equivalent to the following one
$$
\begin{array}{rcl}
&H^1(\OO_L(m(I)-i-3))\\
0 \to H^0(\T(d_0+d_1+d_2-i-4)) \to H^0(\T(d_0+d_1+d_2-i-3)) \to &\oplus & \to 0\\
& H^1(\OO_L(-i-3-m(I)+d_0+d_1+d_2)).
\end{array}
$$
Furthermore, this is equivalent to ask for the map $\phi_{d_0+d_1+d_2-i-4}$ to be injective. As seen before, this happens if and only if $d_0+d_1+d_2-i-4 \leq \deg(F) = d_0+d_1+d_2-m(I)-2.$ Therefore, we can state that $\varphi_i$ is surjective if and only if $i > m(I) - 3$.

Suppose now that $\T$ is either stable or semistable. This implies that 
$$-m(I)\le(-d_0-d_1-d_2)/2.$$
If $d_0+d_1+d_2$ is even, due to Grauert-M\"ulich theorem (see \cite{okonek} for a reference), we have
$$\T_{|L}\cong\OO_{L}((-d_0-d_1-d_2)/2)^2.$$
Considering the short exact sequence
$$0\to \T(i)\to T(i+1)\to \OO_{L}(i+1+(-d_0-d_1-d_2)/2)^2\to0,$$
we have that 
$$
H^0(\OO_L(i+1+(-d_0-d_1-d_2)/2) = 0 \:\mbox{ and therefore }\: \varphi_i \colon H^1(\T(i))\to H^1(T(i+1))
$$
is injective for all $i<(d_0+d_1+d_2-2)/2$.\\
Again by Serre duality applied to the cohomology sequence, we have that $\varphi_i$ is surjective if $i > (d_0+d_1+d_2-6)/2.$

If $d_0+d_1+d_2$ is odd, similarly to the even case, we have the following generic splitting type
$$\T_{|L}\cong\OO_{L}((-d_0-d_1-d_2-1)/2)\oplus\OO_{L}((-d_0-d_1-d_2+1)/2).$$
which leads to the short exact sequence
$$0\to \T(i)\to T(i+1)\to \OO_{L}(i+1+(-d_0-d_1-d_2-1)/2)\oplus\OO_{L}(i+1+(-d_0-d_1-d_2+1)/2)\to0.$$
This implies that the map $\varphi$ is injective for all $i<(d_0+d_1+d_2-3)/2$.\\ 
Once again, by Serre duality applied to the cohomology sequence, we have that $\varphi_i$ is surjective if $i \geq (d_0+d_1+d_2-3)/2.$
\end{proof}

To prove the main result of this section, we need the following result, which is a direct consequence of the previous proposition.

\begin{Corollary}\label{theo:leftcolum3var}
Let $\ell\in S_1$ be a general linear form. Then the multiplication map 
\begin{equation}
\varphi_i \colon H^1(\T(i))\stackrel{\times\ell}{\to} H^1(\T(i+1))
\end{equation}
 is injective for any $i\le \deg(F)-1$ and it is surjective for any $i\ge\deg(F)$.
\end{Corollary}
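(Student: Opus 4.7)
My plan is to derive the corollary directly from Proposition~\ref{prop-bounddeg} by substituting the identity $\deg(F) = d_0+d_1+d_2 - m(I) - 2$ from equation~\eqref{eq:degFmdr} and comparing the resulting bounds with those claimed in the statement, performing a case analysis that mirrors the three cases of the proposition (unstable, stable, semistable).

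The unstable case is immediate: the injectivity bound $i < d_0+d_1+d_2-m(I)-1 = \deg(F)+1$ of Proposition~\ref{prop-bounddeg} trivially contains $\{i \le \deg(F)-1\}$, and for surjectivity the bound $i \ge m(I)-2$ combined with the instability inequality $d_0+d_1+d_2 - 2m(I) > 0$ rewrites as $\deg(F) > m(I)-2$, so $\{i \ge \deg(F)\}$ lies inside the surjectivity range of the proposition. In the stable and semistable cases, where $2m(I)\ge d_0+d_1+d_2$, a short parity check (splitting on whether $d_0+d_1+d_2$ is even or odd) shows that the injectivity thresholds of Proposition~\ref{prop-bounddeg} always contain $\{i \le \deg(F)-1\}$, and the semistable boundary $m(I) = (d_0+d_1+d_2)/2$ matches the surjectivity threshold exactly.

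The main obstacle I expect is the strictly stable subcase, where $\deg(F)$ may lie strictly below the surjectivity threshold provided by Proposition~\ref{prop-bounddeg}. To bridge this residual gap, I would invoke Serre duality: for the rank-$2$ bundle $\T$ with $c_1(\T) = -(d_0+d_1+d_2)$ one has the symmetry $H^1(\T(i))^\vee \cong H^1(\T(d_0+d_1+d_2-i-3))$, and combining this with the injectivity already established in the complementary low-degree range forces equality of the relevant dimensions, thereby upgrading the injective multiplication maps in the gap to bijections and producing the required surjectivity.
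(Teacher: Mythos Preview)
Your treatment of injectivity in all cases, and of surjectivity in the unstable and semistable cases, is correct and follows the paper's route via Proposition~\ref{prop-bounddeg}. The problem is your proposed handling of surjectivity in the strictly stable case.

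The Serre-duality step does not close the gap. The duality $H^1(\T(i))^\vee \cong H^1(\T(d_0+d_1+d_2-i-3))$ converts surjectivity of $\varphi_i$ into injectivity of $\varphi_{d_0+d_1+d_2-i-4}$, so surjectivity for $i\ge\deg(F)$ becomes injectivity for $j\le m(I)-2$. In the strictly stable case one has $m(I)-2 > (d_0+d_1+d_2-4)/2$ (even parity) or $m(I)-2 > (d_0+d_1+d_2-5)/2$ (odd parity), so the required injectivity range strictly exceeds what Proposition~\ref{prop-bounddeg} provides. This is no accident: the surjectivity thresholds in Proposition~\ref{prop-bounddeg} for the stable/semistable case were themselves obtained by Serre-dualising the injectivity thresholds (see the proof there), so applying duality again merely reproduces those bounds. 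Your alternative phrasing---that duality forces $h^1(\T(i))=h^1(\T(i+1))$ on the gap, upgrading injections to bijections---fails for the same reason: symmetry of $i\mapsto h^1(\T(i))$ about $(d_0+d_1+d_2-3)/2$ together with the monotonicity coming from Proposition~\ref{prop-bounddeg} yields only unimodality, not constancy on $[\deg(F),(d_0+d_1+d_2-4)/2]$.

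The paper handles this residual case not by squeezing more out of Proposition~\ref{prop-bounddeg} but by invoking an external result, \cite[Theorem~4.1]{dimcapoplefs}, which gives surjectivity of $\varphi_i$ for all $i\ge\deg(F)$ directly. That theorem uses more structure than the generic splitting type of $\T$: it exploits the identification of $H^1(\T(\bullet))$ with the finite-length module $I^{\sat}/I$ of a dimension-one almost complete intersection. You will need either that citation or an argument of comparable strength; Serre duality alone is not enough.
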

\begin{proof}
If $\T$ is not stable, we have that $\deg(F)+1\le (d_0+d_1+d_2-2)/2$ and $\deg(F) \geq m(I) - 2$, which imply the injectivity and surjectivity of $\varphi_i$ if, respectively, $i \leq \deg(F)$ and $i \geq \deg(F)$, due to Proposition \ref{prop-bounddeg}.

  If $\T$ is either semistable or stable, then $\deg(F)\le (d_0+d_1+d_2-4)/2$, which implies, again by the Proposition \ref{prop-bounddeg}, that $\varphi$ is injective for all $i\le \deg(F)-1$.
Finally, recall that the map $\times\ell \colon H^1(\T(i))\to H^1(T(i+1))$ is surjective for all $i\ge \deg(F)$ follows from \cite[Theorem 4.1]{dimcapoplefs}.\\
\end{proof}

\begin{proof} [Proof of Theorem \ref{thm-main}] 
By Corollary~\ref{theo:leftcolum3var} and the identification described in diagram~\eqref{eqidenlocalh1syz}, the multiplication map 
$$\times\ell \colon (I^{\sat}/I)_i\to (I^{\sat}/I)_{i+1}$$ 
is injective for any $i\le \deg(F)-1$ and surjective for all $i\ge \deg(F)$.\\ 
Combined with Lemma~\ref{theo:relationinject} for the injectivity and with Lemma~\ref{theo:relationsurj} and Lemma~\ref{lemma:rightcolumn} for the surjectivity, this implies that the multiplication map $$\times\ell \colon (S/I)_i\to (S/I)_{i+1}$$ is also injective for any $i\le \deg(F)-1$ and surjective for all $i\ge \deg(F)$. This proves the result.
\end{proof}

As an immediate consequence, we obtain the following result.

\begin{Corollary}\label{corol:generalizconjBPTginfree} Consider $p_0=\min\{p~|~x_1^{p}\in\rgin(I)\}$.
If $\rgin(I)$ has a minimal generator $T$ that involves $x_2$, then $\deg(T)\ge p_0$.
\end{Corollary}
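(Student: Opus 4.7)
The plan is to transfer the claim to $\rgin(I)$, where strong stability makes it combinatorial, and then to exploit the WLP of $S/I$ provided by Theorem~\ref{thm-main}. Concretely, Theorem~\ref{thm-main} gives that $S/I$ has the WLP, and Proposition~\ref{prop:ILPginLP} then yields that $S/\rgin(I)$ has the WLP with Lefschetz element $x_2$. Hence, for every $i\ge 0$, the map
$$
\times x_2 \colon (S/\rgin(I))_i \longrightarrow (S/\rgin(I))_{i+1}
$$
has full rank.

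The key step is to evaluate this map in degree $i=\deg(T)-1$. Since $T$ is a minimal generator of the monomial ideal $\rgin(I)$ and $x_2 \mid T$, the monomial $T':=T/x_2$ does not belong to $\rgin(I)$, so its class in $(S/\rgin(I))_{\deg(T)-1}$ is nonzero, while $x_2 \cdot T' = T \in \rgin(I)$. Thus, the multiplication map $\times x_2$ in degree $\deg(T)-1$ has a nontrivial kernel, and full rank then forces it to be surjective.

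Finally, the surjectivity of $\times x_2 \colon (S/\rgin(I))_{\deg(T)-1} \to (S/\rgin(I))_{\deg(T)}$ means exactly that every monomial of degree $\deg(T)$ that is not in $\rgin(I)$ must be divisible by $x_2$; equivalently, every monomial of degree $\deg(T)$ in $x_0, x_1$ alone lies in $\rgin(I)$. In particular $x_1^{\deg(T)} \in \rgin(I)$, which gives $p_0 \le \deg(T)$. No serious obstacle is anticipated: the only thing to verify carefully is the implication ``full rank $+$ nonzero kernel $\Rightarrow$ surjectivity'', which is immediate from the resulting inequality of Hilbert function values at consecutive degrees.
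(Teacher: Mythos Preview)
Your proof is correct and follows essentially the same line as the paper's: both use Theorem~\ref{thm-main} and Proposition~\ref{prop:ILPginLP} to get full rank of $\times x_2$ on $S/\rgin(I)$, observe that $T/x_2$ lies in the kernel at degree $\deg(T)-1$, and then conclude via the monomial $x_1^{\deg(T)}$. The only difference is organizational---the paper argues by contradiction (assuming $\deg(T)<p_0$ and showing the map is neither injective nor surjective), while you argue directly that non-injectivity forces surjectivity and hence $x_1^{\deg(T)}\in\rgin(I)$.
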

\begin{proof} Suppose that there exists a minimal generator $t$ of $\rgin(I)$, with $\deg(t) = g < p_0$ and $x_2 \mid t$.
By Theorem~\ref{theo:WLP3var1dim}, the quotient ring $S/I$ has the WLP and hence $S/\rgin(I)$ has the WLP as well with Lefschetz element $x_2$. This implies that the multiplication map 
$$\times x_2\colon (S/\rgin(I))_{g-1}\to (S/\rgin(I))_{g}$$ has maximal rank.
However, this map cannot be injective since $\frac{t}{x_2}$ belongs to its kernel and it cannot be surjective since $x_1^{g}$ is a non-zero element of the cokernel. This leads to a contradiction with the existence of the element $t$.
\end{proof}

\section{Hyperplane arrangements and Lefschetz properties}\label{sec:arr}



The study of the Lefschetz properties has been lately of interest when considering, as an ideal, the Jacobian ideal associated to an hyperplane arrangement. After recalling the necessary definitions and properties, we will describe in which cases the Lefschetz properties can be assured, due to the results in the previous section. For more details on hyperplane arrangements, see \cite{orlterao}. 

A finite set of affine hyperplanes $\A =\{H_1, \dots, H_d\}$ in $\KK^{n+1}$ 
is called a \textit{hyperplane arrangement}. For each hyperplane $H_i$ we fix a defining linear polynomial $\alpha_i\in S$ such that $H_i = \alpha_i^{-1}(0)$, 
and let $Q(\A)=\prod_{i=1}^d\alpha_i$. An arrangement $\A$ is called \textit{central} if each $H_i$ contains the origin of $\KK^{n+1}$. 
In this case, each $\alpha_i\in S$ is a linear homogeneous polynomial, and hence $Q(\A)$ is homogeneous of degree $d$.
Moreover, an arrangement $\A$ is called \textit{essential} if there are $H_{i_1},\dots, H_{i_k}\in\A$ such that $\codim(H_{i_1}\cap\cdots\cap H_{i_k})=n+1$.


We denote by $\Der_{\KK^{n+1}} =\{\sum_{i=0}^{n} f_i\partial_{x_i}~|~f_i\in S\}$ the $S$-module of \textit{polynomial vector fields} on $\KK^{n+1}$ (or $S$-derivations). 
Let $\delta =  \sum_{i=0}^{n} f_i\partial_{x_i}\in \Der_{\KK^{n+1}}$. Then $\delta$ is  said to be \textit{homogeneous of polynomial degree} $q$ if $f_0, \dots, f_{n}$ are homogeneous polynomials of degree~$q$ in $S$. 
In this case, we write $\pdeg(\delta) = q$.

\begin{Definition} 
Let $\A$ be a central arrangement in $\KK^{n+1}$. Define the \textit{module of vector fields logarithmic tangent} to $\A$ (or logarithmic vector fields) by
$$D(\A) = \{\delta\in \Der_{\KK^{n+1}}~|~ \delta(\alpha_i) \in \ideal{\alpha_i} S,\: i=1,\ldots,d\}.$$
\end{Definition}

The module $D(\A)$ is a graded $S$-module and $D(\A)= \{\delta\in \Der_{\KK^{n+1}}~|~ \delta(Q(\A)) \in \ideal{Q(\A)} S\}$. 

\begin{Definition} 
A central arrangement $\A$ in $\KK^{n+1}$ is said to be \textit{free with exponents $(e_1,\dots,e_{n+1})$} 
if and only if $D(\A)$ is a free $S$-module and there exists a basis $\delta_1,\dots,\delta_{n+1} \in D(\A)$ 
such that $\pdeg(\delta_i) = e_i$, or equivalently $D(\A)\cong\bigoplus_{i=1}^{n+1}S(-e_i)$.
\end{Definition}



Given an arrangement $\A$ in $\KK^l$, the \textit{Jacobian ideal} $J(\A)$ of $\A$
is the ideal of $S$ generated by $Q(\A)$ and all its partial derivatives.
The Jacobian ideal has a central role in the study of free arrangements.
In fact, we can characterize freeness by looking at $J(\A)$ via the Terao's criterion.
Terao described this result for characteristic $0$, but the statement holds true for any characteristic as shown in \cite{palezzato2018free}.

\begin{Theorem}[\cite{terao1980arrangementsI}]\label{theo:freCMcod2} 
A central arrangement $\A$ in $\KK^{n+1}$ is free if and only if $S/J(\A)$ is $0$ or $(n-1)$-dimensional Cohen--Macaulay.
\end{Theorem}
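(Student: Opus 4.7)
The plan is to establish Terao's criterion by exhibiting a short exact sequence relating $D(\A)$ to $J(\A)$, translating freeness of $D(\A)$ into a projective dimension bound for $S/J(\A)$, and then using the Auslander--Buchsbaum formula to convert this into a depth condition equivalent to the asserted Cohen--Macaulayness.

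First, I would use the Euler derivation $\delta_E = \sum_{i=0}^n x_i \partial_{x_i}$ to split $D(\A)$. Since $\A$ is central, $\delta_E(Q(\A)) = d\cdot Q(\A)$, so $\delta_E \in D(\A)$, and the $S$-linear map $D(\A) \to S$ sending $\delta \mapsto \delta(Q)/Q$ is surjective and splits in characteristic $0$ via $s \mapsto (s/d)\delta_E$. This produces the decomposition $D(\A) = S\cdot \delta_E \oplus D_0(\A)$, where $D_0(\A) = \{\delta \in \Der_{\KK^{n+1}} \mid \delta(Q) = 0\}$, so $\A$ is free if and only if $D_0(\A)$ is a free $S$-module. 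I would then consider the graded $S$-linear map
$$\psi \colon \Der_{\KK^{n+1}} \longrightarrow S(d-1), \qquad \sum_i f_i\, \partial_{x_i} \;\longmapsto\; \sum_i f_i\, \frac{\partial Q}{\partial x_i},$$
whose kernel is precisely $D_0(\A)$ and whose image is the ideal generated by the partials of $Q$; by Euler's identity this coincides with $J(\A)$. Identifying $\Der_{\KK^{n+1}} \cong S^{n+1}$, this yields the short exact sequence
$$0 \to D_0(\A) \to S^{n+1} \to J(\A)(d-1) \to 0.$$

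The main calculation is then an application of the Auslander--Buchsbaum formula. The sequence above exhibits $D_0(\A)$ as the first syzygy module of $J(\A)$, so $D_0(\A)$ is free if and only if $\projdim_S(J(\A)) \le 1$, equivalently $\projdim_S(S/J(\A)) \le 2$. Since $\depth_S(S) = n+1$, Auslander--Buchsbaum converts this into $\depth_S(S/J(\A)) \ge n-1$. On the geometric side, when $\A$ has at least two distinct hyperplanes the variety $V(J(\A))$ is the singular locus of the reduced hypersurface $\{Q = 0\}$; it contains every codimension-$2$ flat $H_i \cap H_j$ and, being the singular locus of a reduced hypersurface, has codimension at least $2$ in $\KK^{n+1}$, whence $\dim(S/J(\A)) = n-1$. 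In the remaining degenerate cases ($d \le 1$), every partial of $Q$ is constant and $J(\A) = S$, so $S/J(\A) = 0$.

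Combining the two sides, if $\A$ is free then $D_0(\A)$ is free and $\depth_S(S/J(\A)) \ge n-1$; together with the dimension estimate this forces either $S/J(\A) = 0$ or $S/J(\A)$ Cohen--Macaulay of dimension exactly $n-1$. Conversely, in both of these cases $\depth_S(S/J(\A)) \ge n-1$ (with the convention $\depth(0) = +\infty$), so $\projdim_S(S/J(\A)) \le 2$ and $D_0(\A)$ is a projective, hence free, $S$-module; thus $D(\A) = S\cdot\delta_E \oplus D_0(\A)$ is free. The main obstacle I expect is the dimension estimate $\dim(S/J(\A)) = n-1$ in the non-trivial case, which is what upgrades the projective-dimension bound obtained from Auslander--Buchsbaum to a genuine Cohen--Macaulay statement of the prescribed dimension; extending the argument to arbitrary characteristic requires replacing the Euler splitting when $d \equiv 0$ in $\KK$, which is the content of \cite{palezzato2018free}.
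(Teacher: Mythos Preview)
The paper does not supply a proof of this theorem; it is quoted as a known result from Terao's original work \cite{terao1980arrangementsI}, with the remark that the characteristic-zero hypothesis can be removed as in \cite{palezzato2018free}. There is therefore nothing to compare your argument against in the paper itself.

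That said, your proposal is correct and is essentially the classical proof. The decomposition $D(\A)=S\cdot\delta_E\oplus D_0(\A)$ via the Euler derivation, the identification of $D_0(\A)$ with the first syzygy module of $J(\A)$ through the exact sequence $0\to D_0(\A)\to S^{n+1}\to J(\A)(d-1)\to 0$, and the translation of freeness into $\projdim_S(S/J(\A))\le 2$ via Auslander--Buchsbaum are exactly the standard ingredients. The dimension computation $\dim(S/J(\A))=n-1$ for $d\ge 2$ (singular locus of a reduced hypersurface, containing every codimension-two flat $H_i\cap H_j$) is also the usual one, and your handling of the degenerate cases $d\le 1$ is fine. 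The only mild caution is the passage ``projective, hence free'': you are implicitly using that finitely generated graded projective $S$-modules are free, which is elementary (graded Nakayama) and does not require Quillen--Suslin; it may be worth saying so explicitly. Your closing remark about the Euler splitting failing when $\mathrm{char}(\KK)\mid d$ correctly pinpoints where the positive-characteristic refinement of \cite{palezzato2018free} enters.
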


In \cite{Gin-freearr}, the authors connected the study of generic initial ideals to the one of arrangements, obtaining a new characterization of freeness via the generic initial ideal of the Jacobian ideal.

\begin{Proposition}[\cite{Gin-freearr}]\label{prop:shapergin}
Let $\A =\{H_1, \dots, H_d\}$ be a central arrangement in $\KK^{n+1}$. 
Then $\rgin(J(\A))$ coincides with $S$ or 
its minimal generators include $x_0^{d-1}$, some positive power of
$x_1$, and no monomials only in $x_2,\dots, x_{n}$. 
\end{Proposition}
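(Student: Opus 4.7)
My strategy is to split into an easy degenerate case and a general one, first identifying $x_0^{d-1}$ as a minimal generator and then pinning down $\sqrt{\rgin(J(\A))}$ to handle the other two conditions at once. If $Q(\A)$ has degree at most $1$, then some $\partial_{x_i}Q(\A)$ is a nonzero constant, forcing $J(\A)=S$ and $\rgin(J(\A))=S$; henceforth I assume $d\ge 2$. Euler's relation $d\cdot Q(\A)=\sum_i x_i\partial_{x_i}Q(\A)$ shows that $J(\A)$ is generated by the $n+1$ partial derivatives $\partial_{x_i}Q(\A)$, each of degree $d-1$. Therefore $J(\A)_k=0$ for $k<d-1$ and, by Hilbert-function invariance from Remark~\ref{rem:rginsameHFasideal}(a), the same vanishing holds for $\rgin(J(\A))$; in particular it has no minimal generators below degree $d-1$.

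To show $x_0^{d-1}$ is a minimal generator, I would compute the coefficient of $x_0^{d-1}$ in $g\cdot f$ for a generic $g\in\GL(n+1)$ and any nonzero $f\in S_{d-1}$: evaluating $g\cdot f$ at $(x_0,0,\ldots,0)$ and using that $f$ is homogeneous of degree $d-1$ yields $x_0^{d-1}\cdot f(g_{00},g_{10},\ldots,g_{n0})$, and the factor $f(g_{00},\ldots,g_{n0})$ is a nonzero polynomial in the first column of $g$, hence generically nonzero. Since $x_0^{d-1}$ is the maximum monomial of degree $d-1$ in the degree reverse lexicographic order with $x_0>x_1>\cdots>x_n$, it is the leading term of $g\cdot f$ for generic $g$. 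Applying this to any nonzero $\partial_{x_i}Q(\A)$ gives $x_0^{d-1}\in\rgin(J(\A))$, and the absence of lower-degree generators makes it a minimal one.

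For the last two conditions the plan is to identify $\sqrt{\rgin(J(\A))}$. A product-rule calculation gives $V(J(\A))=\bigcup_{i<j}H_i\cap H_j$ set-theoretically: a point lying on fewer than two of the (distinct) hyperplanes has some $\partial_{x_i}Q(\A)$ nonzero, while any $p\in H_i\cap H_j$ kills every summand of every $\partial_{x_l}Q(\A)$. Hence $\codim\rgin(J(\A))=\codim J(\A)=2$. Invoking the classical fact that every associated prime of a strongly stable ideal has the form $(x_0,\ldots,x_k)$ — so in particular the minimal primes are totally ordered by inclusion — codimension $2$ forces $(x_0,x_1)$ to be the unique minimal prime, whence $\sqrt{\rgin(J(\A))}=(x_0,x_1)$. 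This yields both remaining assertions at once: every minimal generator of $\rgin(J(\A))$ lies in $(x_0,x_1)$ and so involves $x_0$ or $x_1$, ruling out minimal generators in $x_2,\ldots,x_n$ alone; and the smallest $k\ge 1$ with $x_1^k\in\rgin(J(\A))$, which exists since $x_1\in\sqrt{\rgin(J(\A))}$, furnishes a pure-$x_1$ minimal generator. The main delicacy I anticipate is the clean invocation of the structure theorem for associated primes of strongly stable ideals; the codimension verification and the leading-term calculation of the preceding paragraph are essentially routine.
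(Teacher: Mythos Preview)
The paper does not supply its own proof of this proposition: it is quoted from \cite{Gin-freearr} and stated without argument, so there is no in-paper proof to compare against.

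Your argument is correct. The degenerate case $d\le 1$ is immediate; for $d\ge 2$ you correctly identify $x_0^{d-1}$ as the leading term of a generic transform of any nonzero partial derivative, and minimality follows from the absence of generators in degree below $d-1$. Your key step---computing $\sqrt{\rgin(J(\A))}=(x_0,x_1)$ via the codimension together with the structure of associated primes of strongly stable ideals---is sound: the singular locus of a reduced hyperplane arrangement is exactly the union of pairwise intersections (your product-rule verification is fine), giving codimension~$2$; Hilbert-function invariance (Remark~\ref{rem:rginsameHFasideal}(a)) transfers this to $\rgin(J(\A))$; and the fact that every associated prime of a Borel-fixed monomial ideal has the form $(x_0,\ldots,x_k)$ is standard, forcing a unique minimal prime $(x_0,x_1)$. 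Both remaining assertions then drop out as you describe: every monomial of $\rgin(J(\A))$ lies in $(x_0,x_1)$, and the least $k$ with $x_1^k\in\rgin(J(\A))$ gives a minimal pure-$x_1$ generator.

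Two cosmetic remarks. First, the point you flag as delicate---the associated-prime structure of strongly stable ideals---is in fact the most routine part: it follows in one line from the exchange property, since if a monomial $t$ lies in the ideal and $x_j\mid t$ for some $j$, then so does $x_0\cdot t/x_j$, so every associated prime contains $x_0$, and one iterates. Second, your identity $V(J(\A))=\bigcup_{i<j}H_i\cap H_j$ tacitly uses that the $\alpha_i$ are pairwise non-proportional (so the hyperplanes are genuinely distinct); this is implicit in the set notation $\A=\{H_1,\dots,H_d\}$ but worth stating.
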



\begin{Theorem}[\cite{Gin-freearr}]\label{theo:firstequivfregin}
Let $\A =\{H_1, \dots, H_d\}$ be a central arrangement in $\KK^{n+1}$. 
Then 
$\A$ is free if and only if
$\rgin(J(\A))$ coincides with $S$ or it is minimally generated by
$$x_0^{d-1}, x_0^{d-2}x_1^{\lambda_1}, \dots, x_1^{\lambda_{d-1}}$$ 
with $1\le\lambda_1<\lambda_2<\cdots<\lambda_{d-1}$ and $\lambda_{i{+}1}-\lambda_i= 1$ or $2$.
\end{Theorem}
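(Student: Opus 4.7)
The plan is to convert the freeness of $\A$ into a projective-dimension statement about $S/\rgin(J(\A))$ and then read off the shape of the resulting strongly stable ideal. Combining Terao's criterion (Theorem~\ref{theo:freCMcod2}) with the Auslander-Buchsbaum formula and $\depth(S)=n+1$ shows that $\A$ is free if and only if $J(\A)=S$ or $\projdim(S/J(\A))=2$. By Bayer-Stillman, the generic initial ideal in the degree reverse lexicographic order preserves projective dimension, so the condition rewrites as $\rgin(J(\A))=S$ or $\projdim(S/\rgin(J(\A)))=2$. The Eliahou-Kervaire resolution expresses $\projdim(S/\rgin(J(\A)))$ as $1+\max\{i : x_i \text{ divides some minimal generator of } \rgin(J(\A))\}$; hence $\projdim\le 2$ if and only if every minimal generator of $\rgin(J(\A))$ involves only $x_0$ and $x_1$.

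Combining this with Proposition~\ref{prop:shapergin} immediately forces the minimal generators to have the form $x_0^{d-1}, x_0^{a_1}x_1^{\lambda_1},\ldots, x_1^{\lambda_k}$ with strictly decreasing $x_0$-exponents $d-1>a_1>\cdots>a_{k-1}>0$ and strictly increasing $x_1$-exponents $0<\lambda_1<\cdots<\lambda_k$. To upgrade this to the precise form stated, one must show (i) that the $x_0$-exponents take every value in $\{0,1,\ldots,d-1\}$, so that $k=d-1$ and $a_j=d-1-j$, and (ii) that the gaps $\lambda_{i+1}-\lambda_i$ lie in $\{1,2\}$. For (i), the approach is to compare the Hilbert function of $S/J(\A)$ around degree $d-1$ (determined by the $n+1$ partial derivatives of $Q(\A)$ together with the Euler relation) with the Hilbert function computed directly from the candidate staircase; the equality of these counts forces every intermediate $x_0$-exponent to be attained by some minimal generator.

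The main obstacle is (ii): the gap condition is not a generic feature of strongly stable ideals in two variables, so the argument must exploit the special structure of $J(\A)$. The route I would take is through the regularity equality $\reg(\rgin(J(\A)))=\reg(J(\A))$ from Remark~\ref{rem:rginsameHFasideal}(b), combined with the short minimal free resolution of $S/J(\A)$ produced by a basis $\delta_1,\ldots,\delta_{n+1}$ of $D(\A)$ with $\pdeg(\delta_i)=e_i$ and $\sum e_i=d$. The Eliahou-Kervaire syzygies of the candidate staircase live in degrees $d-1-i+\lambda_{i+1}$ for $i=0,\ldots,d-2$, while the resolution forces the second Betti degrees to match the shifts $e_j+d-1$. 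The numerical constraint $\sum e_j=d$ together with this degree matching should force $\lambda_{i+1}-\lambda_i\le 2$. The converse direction follows at once: the listed shape yields $\projdim(S/\rgin(J(\A)))=2$, and the equivalence of the first paragraph then delivers freeness.
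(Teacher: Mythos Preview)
The paper does not prove this statement; it is quoted verbatim from \cite{Gin-freearr} and carries no argument here. So there is no ``paper's own proof'' to compare against beyond the citation.

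On the merits of your proposal: the first paragraph is correct and is exactly the standard reduction. Terao's criterion plus Auslander--Buchsbaum gives freeness $\Leftrightarrow$ $\projdim(S/J(\A))\le 2$ (Proposition~\ref{prop:shapergin} guarantees $\dim(S/J(\A))=n-1$ whenever $J(\A)\ne S$), Bayer--Stillman transfers this to $\rgin$, and Eliahou--Kervaire then says the minimal generators involve only $x_0,x_1$. Your point (i) is in fact automatic and does not need any Hilbert-function comparison: for a strongly stable ideal in $\KK[x_0,x_1]$, if $x_0^{a}x_1^{b}$ is a minimal generator with $b\ge 1$, then $x_0^{a+1}x_1^{b-1}\in I$ forces the next minimal generator (in the $x_0$-direction) to have $x_0$-exponent exactly $a+1$. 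Iterating gives consecutive $x_0$-exponents $d-1,d-2,\ldots,0$.

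The real gap is (ii), and your proposed route does not close it. You suggest matching the second-syzygy degrees of the Eliahou--Kervaire resolution of $\rgin(J(\A))$ with the shifts $e_j+d-1$ coming from a basis of $D(\A)$, i.e.\ with the Hilbert--Burch resolution of $S/J(\A)$. But these are resolutions of \emph{different} ideals: $\rgin(J(\A))$ has $d$ minimal generators and $d-1$ first syzygies, while $J(\A)$ has at most $n+1$ minimal generators and at most $n$ first syzygies. Passage to $\rgin$ preserves regularity and projective dimension, not the full list of graded Betti numbers, so there is no reason the two multisets of syzygy degrees should coincide, and your ``$\sum e_j=d$ together with degree matching should force $\lambda_{i+1}-\lambda_i\le 2$'' is not an argument. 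A workable approach is to compute the Hilbert function of $S/J(\A)$ explicitly from the free resolution determined by the exponents $(e_1,\ldots,e_{n+1})$ and then read off the $\lambda_i$ directly from the two-variable staircase (this is essentially what is done in \cite{Gin-freearr}, using the machinery of sectional matrices from \cite{BPT2016}); the gap condition then falls out of the identity $\sum e_i=d$. As written, however, your step (ii) is only a hope, not a proof.
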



The following Conjecture appeared in \cite{Gin-freearr}.

\begin{Conjecture}\label{conj:generatZ}
Let $\A$ be a central arrangement in $\KK^{n+1}$, 
and consider $p_0=\min\{p~|~x_1^{p}\in\rgin(J(\A))\}$.
If $\rgin(J(\A))$ has a minimal generator $t$ that involves $x_2$, then $\deg(t)\ge p_0$.
\end{Conjecture}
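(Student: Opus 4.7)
The plan is to reduce the conjecture to the 3-variable case of Corollary~\ref{corol:generalizconjBPTginfree} via a generic 3-plane section. Let $L \subset \KK^{n+1}$ be a generic 3-dimensional linear subspace through the origin, let $\A|_L$ denote the restricted arrangement in $L \cong \KK^3$, and choose coordinates so that $L = V(x_3, \ldots, x_n)$.

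Suppose, for contradiction, that $G := \rgin(J(\A))$ has a minimal generator $t = x_0^{a_0} x_1^{a_1} \prod_{j \geq 2} x_j^{a_j}$ with $a_2 \geq 1$ and $\deg(t) = g < p_0$. Since $G$ is strongly stable by Theorem~\ref{thm:Galligo}, iteratively replacing each $x_j$ ($j \geq 3$) by $x_2$ yields $t^* := x_0^{a_0} x_1^{a_1} x_2^{a_2 + a_3 + \cdots + a_n} \in G$ of the same degree $g$. Let $\bar S := \KK[x_0,x_1,x_2]$ and $\bar G := G \cap \bar S$; this is a strongly stable ideal in $\bar S$ containing $t^*$. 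Any minimal generator $t'$ of $\bar G$ dividing $t^*$ must involve $x_2$, for otherwise $t' \in \KK[x_0,x_1]$ would divide $x_0^{a_0} x_1^{a_1}$ and hence $t$, contradicting the minimality of $t$ in $G$. In particular $\deg(t') \leq g$, and $\bar p_0 := \min\{p : x_1^p \in \bar G\} = p_0$ because $x_1^p$ lives in $\bar S$. If one can identify $\bar G$ with $\rgin(J(\A|_L))$, then Corollary~\ref{corol:generalizconjBPTginfree} applied to $J(\A|_L)$ forces $\deg(t') \geq \bar p_0 = p_0$, contradicting $\deg(t') \leq g < p_0$.

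The main obstacle is precisely this last identification. The formation of $\rgin$ does not commute with restriction to a linear subspace in general, though Lemma~\ref{lemma:cginsatcommute} and Remark~\ref{rem:rginsameHFasideal}(c) provide partial commutations via saturation. In our setting, the restricted ideal $J(\A)|_L$ contains $J(\A|_L)$ with the discrepancy concentrated in the extra partial derivatives $\partial_{x_j}Q$ for $j \geq 3$, which restrict nontrivially to $L$ but should not contribute new minimal generators once a generic change of coordinates is applied. A careful Galligo-type analysis, exploiting the double genericity of $L$ and of the linear change computing $\rgin$, is expected to yield $\bar G = \rgin(J(\A|_L))$, or at least a containment strong enough to transfer the counterexample. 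The delicate technical point will be to rule out spurious low-degree minimal generators in the restricted ideal that could disrupt this transfer.
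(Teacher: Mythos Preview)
The paper does \emph{not} prove this statement in full generality: it is explicitly recorded as a conjecture, and only the case of arrangements in $\KK^3$ is settled (as the theorem in Section~\ref{sec:arr} immediately following the example), where it is a direct consequence of Corollary~\ref{corol:generalizconjBPTginfree} applied to $I=J(\A)$. There is thus no proof of the general statement in the paper to compare against.

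Your reduction strategy has a genuine gap, and it is not merely technical. Even granting that $\rgin$ commutes with generic hyperplane section (so that, iterating, $\bar G = \rgin(J(\A)|_L)$ for a generic $3$-plane $L$), the ideal $J(\A)|_L$ is \emph{not} $J(\A|_L)$: the former is generated by the restrictions of all $n{+}1$ partial derivatives of $Q(\A)$, the latter by only the three partials along $L$, and for generic $L$ the extra partials $(\partial_{x_j}Q)|_L$ with $j\ge 3$ are not redundant. Hence $J(\A)|_L$ is not a three-generated almost complete intersection in $\bar S$, and Corollary~\ref{corol:generalizconjBPTginfree}, whose hypotheses require exactly three generators, does not apply to it. Put differently, to run the contradiction argument of that corollary you would need $\bar S/\bar G$ to have the WLP with Lefschetz element $x_2$, but nothing in the paper establishes WLP for ideals with more than three generators in three variables. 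The ``double genericity'' you invoke does not bridge this: the generic change of coordinates computing $\rgin$ and the generic choice of $L$ are independent and provide no mechanism forcing $\bar G$ to coincide with the $\rgin$ of a three-generated ideal. The conjecture for $n\ge 3$ therefore remains open, and your outline does not close it.
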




In \cite{palezzato2020lefschetz} and then in \cite{palezzato2020klef}, the authors studied the connection between the Jacobian algebra $S/J(\A)$ of an arrangement $\A$ and the Lefschetz properties obtaining the following results.

\begin{Proposition}{\cite{palezzato2020lefschetz}}\label{prop:l=2SLP}
Let $\A$ be a central arrangement in $\KK^2$. Then $S/J(\A)$ has the SLP.
\end{Proposition}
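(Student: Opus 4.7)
The plan is to reduce the statement to a combinatorial property of Artinian strongly stable ideals in two variables via the generic initial ideal.

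First, I would show that $S/J(\A)$ is Artinian. In characteristic zero, Euler's identity gives $J(\A)=(\partial_{x_0}Q(\A),\partial_{x_1}Q(\A))$, and these two partial derivatives are coprime in $\KK[x_0,x_1]$: any common factor would be a linear form $\alpha_j$ dividing $Q(\A)$, and then $\alpha_j\mid \partial_{x_0}Q(\A)$ together with $\alpha_j\mid\partial_{x_1}Q(\A)$ would force both partial derivatives of the linear form $\alpha_j$ to vanish, which is absurd. Hence $J(\A)$ is a complete intersection of two forms and $S/J(\A)$ is Artinian. (Alternatively, every central arrangement in $\KK^2$ is free, so Theorem \ref{theo:freCMcod2} gives directly that $S/J(\A)$ is Cohen--Macaulay of dimension $0$.)

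By Proposition \ref{prop:ILPginLP}, it suffices to show that $S/\rgin(J(\A))$ has the SLP with Lefschetz element $x_1$. Setting $J:=\rgin(J(\A))$, the ideal $J$ is strongly stable by Theorem \ref{thm:Galligo} and Artinian by Remark \ref{rem:rginsameHFasideal}(a). The whole problem thus reduces to the following combinatorial claim: \emph{for every Artinian strongly stable monomial ideal $J\subset \KK[x_0,x_1]$, the variable $x_1$ is a strong Lefschetz element of $S/J$.}

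To prove the claim, I would encode $J$ through the staircase function $f(a)=\max\{b : x_0^ax_1^b\notin J\}$, with the convention $f(a)=-1$ when $x_0^a\in J$. Strong stability in two variables is exactly the condition $f(a+1)\le f(a)-1$ whenever $f(a)\ge 0$, so $f$ is strictly decreasing on its support $\{0,\dots,a_{\max}\}$. In the monomial basis the map $\times x_1^s\colon (S/J)_i\to(S/J)_{i+s}$ acts by the vertical shift $x_0^ax_1^{i-a}\mapsto x_0^ax_1^{i+s-a}$ (or $0$), preserving the $x_0$-degree. Writing $g(a)=a+f(a)$ and $m_k=\max\{a\le a_{\max}:g(a)\ge k\}$, one obtains directly the formulas $\dim_\KK(S/J)_k=\min(m_k+1,\,k+1)$ and $\rk(\times x_1^s)=\min(m_{i+s}+1,\,i+1)$; a short case split on whether $m_{i+s}\ge i$ then yields $\rk(\times x_1^s)=\min(\dim_\KK(S/J)_i,\dim_\KK(S/J)_{i+s})$ in every case.

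The main obstacle is that some basis monomials of $(S/J)_{i+s}$ may carry $x_0$-degree $a>i$, and such monomials cannot lie in the image of $\times x_1^s$ from degree $i$; so surjectivity can genuinely fail. The case analysis is organized around this observation: precisely when such unreachable monomials appear, one is in the injective regime $\dim_\KK(S/J)_i\le\dim_\KK(S/J)_{i+s}$, and the strict monotonicity of $f$ forces $m_{i+s}=m_i$, so injectivity of the correct rank is preserved and full rank still holds. Combined with the reduction to $\rgin(J(\A))$, this proves the SLP of $S/J(\A)$.
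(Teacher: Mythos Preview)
Your argument is correct. The reduction to $\rgin$ via Proposition~\ref{prop:ILPginLP} is legitimate, and your combinatorial analysis of Artinian strongly stable ideals in two variables is sound: the strict decrease of $f$ makes $g(a)=a+f(a)$ non-increasing, so the sets $\{a:g(a)\ge k\}$ are initial segments, and your rank computation $\rk(\times x_1^s)=\min(i+1,m_{i+s}+1)=\min(\dim(S/J)_i,\dim(S/J)_{i+s})$ goes through exactly as you outline. In fact what you have proved is the stronger (and classical) statement that \emph{every} Artinian quotient of $\KK[x_0,x_1]$ has the SLP.

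The paper itself does not prove Proposition~\ref{prop:l=2SLP}; it merely quotes it from \cite{palezzato2020lefschetz}. Within the present paper, however, there is a much shorter route that you touch on but do not exploit fully: every central arrangement in $\KK^2$ is free (reflexive modules of rank $2$ over a $2$-dimensional regular ring are free), so Theorem~\ref{thm:FreeSLP} applies directly and gives the SLP without any further work. You invoke freeness only to deduce Artinianity via Theorem~\ref{theo:freCMcod2}, and then embark on an independent combinatorial argument; this is correct but circuitous compared to simply citing Theorem~\ref{thm:FreeSLP}. The trade-off is that your approach is self-contained and establishes a result of independent interest (SLP for all Artinian quotients in two variables), whereas the freeness route is a one-line reduction but depends on the machinery behind Theorem~\ref{thm:FreeSLP}.
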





\begin{Theorem}{\cite{palezzato2020lefschetz}}\label{thm:FreeSLP}
Let $\A$ be a free arrangement in $\KK^{n+1}$. Then $S/J(\A)$ has the SLP.
\end{Theorem}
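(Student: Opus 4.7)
The plan is to show that, for a free arrangement, $J(\A)$ is automatically a saturated ideal, and then invoke Corollary~\ref{corol:SLPsaturideal} directly. First I would dispose of the degenerate case $\rgin(J(\A))=S$, i.e. $J(\A)=S$ and $S/J(\A)=0$, where the claim is vacuous.

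In the non-trivial case, the main input is Theorem~\ref{theo:firstequivfregin}, which pins down the minimal generators of $\rgin(J(\A))$ as $x_0^{d-1}, x_0^{d-2}x_1^{\lambda_1},\dots,x_1^{\lambda_{d-1}}$, i.e. monomials supported entirely in $\KK[x_0,x_1]$. In particular, no minimal generator of $\rgin(J(\A))$ involves $x_n$, so Remark~\ref{rem:rginsameHFasideal}(c) yields
$$\rgin(J(\A)^{\sat})=\rgin(J(\A))_{x_n\to 0}=\rgin(J(\A)).$$
Since $\rgin$ preserves Hilbert functions (Remark~\ref{rem:rginsameHFasideal}(a)), the quotients $S/J(\A)$ and $S/J(\A)^{\sat}$ have the same Hilbert function; combined with the inclusion $J(\A)\subseteq J(\A)^{\sat}$, this forces $J(\A)=J(\A)^{\sat}$, so $J(\A)$ is saturated.

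Applying Corollary~\ref{corol:SLPsaturideal} to the saturated ideal $J(\A)$ then concludes that $S/J(\A)$ has the SLP, finishing the proof.

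The argument is essentially a bookkeeping exercise with generic initial ideals once Theorem~\ref{theo:firstequivfregin} is available; the only substantial obstacle, namely the classification of $\rgin(J(\A))$ for free arrangements, is imported from \cite{Gin-freearr}. One may also remark that for $n\ge 3$ the statement could alternatively be deduced from Theorem~\ref{theo:bigequivbigdimSLP} via the Cohen--Macaulay property (which provides $\depth(S/J(\A))=n-1\ge 2$), but the approach through the shape of $\rgin(J(\A))$ treats all dimensions uniformly and is therefore preferable.
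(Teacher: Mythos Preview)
The paper does not supply its own proof of this theorem; it is quoted verbatim as a result from \cite{palezzato2020lefschetz}, so there is no in-paper argument to compare against directly.

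Your argument is essentially correct for $n\ge 2$: Theorem~\ref{theo:firstequivfregin} forces every minimal generator of $\rgin(J(\A))$ to lie in $\KK[x_0,x_1]$, so for $n\ge 2$ none of them is divisible by $x_n$; then either your saturation route, or more directly the observation that multiplication by $x_n^s$ on $S/\rgin(J(\A))$ is injective (exactly the mechanism inside the proof of Corollary~\ref{corol:SLPsaturideal}), yields the SLP via Proposition~\ref{prop:ILPginLP}. The detour through saturation is not needed---once you know the minimal generators avoid $x_n$ you can invoke injectivity of $\times x_n^s$ immediately---but it does no harm.

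There is, however, a small gap when $n=1$: in that case $x_n=x_1$, and the generator $x_1^{\lambda_{d-1}}$ provided by Theorem~\ref{theo:firstequivfregin} \emph{does} involve $x_n$, so the assertion ``no minimal generator of $\rgin(J(\A))$ involves $x_n$'' is false and your saturation argument collapses. This case must be treated separately; Proposition~\ref{prop:l=2SLP} already handles it (every central arrangement in $\KK^2$ is free, so the proposition covers all free arrangements there). You should insert a sentence disposing of $n\le 1$ explicitly before restricting to $n\ge 2$.
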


Directly from Theorem~\ref{theo:WLP3var1dim}, we obtain the following result.
\begin{Theorem}\label{theo:WLParr3var} Let $\A$ be a central and essential arrangement in $\KK^3$. Then $S/J(\A)$ has the WLP.
\end{Theorem}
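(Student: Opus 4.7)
The plan is to reduce Theorem~\ref{theo:WLParr3var} to a direct application of Theorem~\ref{theo:WLP3var1dim}. To do so, I need to verify the two hypotheses of that theorem for the Jacobian ideal $J(\A)$: that it can be generated by three homogeneous polynomials in $S=\KK[x_0,x_1,x_2]$, and that $\dim(S/J(\A))=1$.

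First, I would use the Euler identity. Since $\A$ is central, $Q(\A)$ is homogeneous of degree $d$, and hence $d\cdot Q(\A) = x_0\,\partial_{x_0}Q + x_1\,\partial_{x_1}Q + x_2\,\partial_{x_2}Q$. This shows $Q(\A)$ already belongs to the ideal generated by its partial derivatives, so $J(\A) = \ideal{\partial_{x_0}Q,\,\partial_{x_1}Q,\,\partial_{x_2}Q}$, three homogeneous polynomials of degree $d-1$ in $S$. This matches the setup of Theorem~\ref{theo:WLP3var1dim} with $f_i = \partial_{x_i}Q$ and $d_0=d_1=d_2=d-1$.

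Next, I would show that $\dim(S/J(\A))=1$, equivalently that $V(J(\A))\subset\PP^2$ is zero-dimensional and nonempty. Projectivizing $\A$ yields a line arrangement in $\PP^2$, and set-theoretically the zero locus of $J(\A)$ is the singular locus of $\{Q(\A)=0\}$, which is the union of points where at least two lines of the projectivized arrangement meet. Because $\A$ is essential, its hyperplanes intersect only at the origin of $\KK^3$, so the projectivized lines are not concurrent and $d\geq 3$; hence this singular locus is a finite nonempty subset of $\PP^2$, giving $\dim(S/J(\A))=1$.

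With both hypotheses verified, Theorem~\ref{theo:WLP3var1dim} applies to $I=J(\A)$ and yields that $S/J(\A)$ has the WLP. There is really no substantive obstacle in this argument: all the heavy lifting (the syzygy-bundle analysis, the splitting types, and the diagram chase relating $S/I$ to $S/I^{\sat}$) has already been done in Section~3, and the only arrangement-theoretic ingredients needed are the reduction of generators via Euler and the standard fact that the singular locus of a central essential line arrangement in $\PP^2$ is a finite set of points.
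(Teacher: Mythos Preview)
Your proposal is correct and follows exactly the paper's approach: the paper derives Theorem~\ref{theo:WLParr3var} as an immediate consequence of Theorem~\ref{theo:WLP3var1dim}, and you have simply spelled out the verification of its hypotheses (three homogeneous generators via Euler's identity, and $\dim(S/J(\A))=1$ via the singular locus of the projectivized arrangement being finite and nonempty). There is nothing to add or correct.
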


Notice that there are arrangements in $\KK^3$ such that their Jacobian algebra does not have the SLP.

\begin{Example} Let $\A$ be the arrangement in $\RR^3$ with defining polynomial $Q(\A)=xz(x-z)(x-y)(y-z)(y-2z)(y-3z)(y-4z)$.
In this case we have that $\HF(S/\rgin(J(\A)),8)=\HF(S/\rgin(J(\A)),11)=36$ and that $x^2y^6z^3$ is a minimal generator of $\rgin(J(\A))$.
This shows that the multiplication map $\times z^3\colon (S/\rgin(J(\A)))_8 \to (S/\rgin(J(\A)))_{10}$ does not have maximal rank, and hence that $z$ is not a Lefschetz element for $S/\rgin(J(\A))$. By \cite[Lemma 2.8]{palezzato2020lefschetz},
$S/\rgin(J(\A))$ does not have the SLP. This implies that by Proposition~\ref{prop:ILPginLP}, also $S/J(\A)$ does not have the SLP. 
\end{Example}

Directly from Corollary~\ref{corol:generalizconjBPTginfree}, we obtain the following result.
\begin{Theorem} Conjecture~\ref{conj:generatZ} holds true for arrangements in $\KK^3$.
\end{Theorem}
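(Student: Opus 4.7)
The plan is to apply Corollary~\ref{corol:generalizconjBPTginfree} to $I = J(\A)$ by verifying that the Jacobian ideal satisfies the hypotheses of Section~3.2: three homogeneous generators in $S=\KK[x_0,x_1,x_2]$ with $\dim(S/J(\A))=1$. I first dispose of the non-essential case. After a linear change of coordinates, every $\alpha_i$ may be assumed to lie in $\KK[x_0,x_1]$, so $\partial_{x_2}Q(\A)=0$ and $J(\A)$ is extended from an ideal of $\KK[x_0,x_1]$. Then $x_2$ is a non-zero-divisor on $S/J(\A)$, which makes $J(\A)$ saturated, and by Remark~\ref{rem:rginsameHFasideal}(c) the ideal $\rgin(J(\A))$ has no minimal generator involving $x_2$; the hypothesis of Conjecture~\ref{conj:generatZ} is therefore vacuous.

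For essential $\A$, the Euler identity $d\cdot Q(\A)=\sum_{i=0}^{2}x_i\,\partial_{x_i}Q(\A)$ lets me rewrite $J(\A)=\ideal{\partial_{x_0}Q(\A),\partial_{x_1}Q(\A),\partial_{x_2}Q(\A)}$, giving three generators of the required form, all of degree $d-1$. To verify $\dim(S/J(\A))=1$ I would identify the zero locus $V(J(\A))\subset\KK^3$ with $\bigcup_{j\neq k}(H_j\cap H_k)$ via a short computation with the Jacobian of $\prod_i\alpha_i$: a point lying on at most one hyperplane cannot annihilate all three partials, since otherwise a non-zero linear form would have vanishing gradient. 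Therefore $V(J(\A))$ is a finite union of lines through the origin, $\dim(S/J(\A))=1$, and Corollary~\ref{corol:generalizconjBPTginfree} applies directly, yielding the conclusion.

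The main obstacle is the dimension computation in the essential case, though it reduces to the standard fact that the critical scheme of a projective line arrangement consists of its finitely many multiple points. The more subtle issue is simply remembering to split off the non-essential case, where the conclusion follows not from the corollary itself but from the saturated-ideal structure of $\rgin(J(\A))$ via Remark~\ref{rem:rginsameHFasideal}(c).
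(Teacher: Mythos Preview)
Your proof is correct and follows the same route as the paper, which simply states that the result is a direct consequence of Corollary~\ref{corol:generalizconjBPTginfree}. Your added verification that $J(\A)$ actually satisfies the standing hypotheses of Section~3.2 (three generators via Euler, $\dim(S/J(\A))=1$ via the identification of the singular locus with $\bigcup_{j\neq k}H_j\cap H_k$), together with your separate disposal of the non-essential case through the saturation of $J(\A)$, fills in details the paper leaves implicit but does not change the strategy.
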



\begin{Theorem}\label{theo-WLPfail}
Let $\A$ be a central and essential arrangement in $\KK^{n+1}$ with $d=|\A|$. If $S/J(\A)$ fails the WLP in degree $k$, then $k\ge d$.
\end{Theorem}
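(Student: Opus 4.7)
We aim to show that, for a general linear form $\ell\in S_1$, the map
\[\varphi_k\colon (S/J(\A))_k \stackrel{\times\ell}{\longrightarrow} (S/J(\A))_{k+1}\]
has maximal rank whenever $k<d$. By Euler's identity the Jacobian ideal is $J(\A)=(\partial_0Q,\ldots,\partial_nQ)$, generated by the $n+1$ partial derivatives of $Q=Q(\A)$, each of degree $d-1$; essentiality of $\A$ forces them to be $\KK$-linearly independent, whence $J(\A)_k=0$ for $k\le d-2$ and $\dim_\KK J(\A)_{d-1}=n+1$.

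If $k\le d-3$, the source and target of $\varphi_k$ coincide with $S_k$ and $S_{k+1}$ respectively, so $\varphi_k$ is multiplication by a nonzero element in the domain $S$ and is injective, hence of maximal rank.

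For $k=d-2$, the kernel of $\varphi_{d-2}$ is identified with $J(\A)_{d-1}\cap \ell\cdot S_{d-2}$, that is, with the kernel of the restriction map $J(\A)_{d-1}\to\bar S_{d-1}$, where $\bar S=S/(\ell)$. Writing $\partial_iQ=\sum_{j=1}^d(\partial_i\alpha_j)\prod_{h\ne j}\alpha_h$ and setting $\bar\beta_j=\prod_{h\ne j}\bar\alpha_h\in\bar S_{d-1}$, I would argue that: (i) for a generic $\ell$, the $\bar\beta_j$ are $\KK$-linearly independent in $\bar S_{d-1}$, since evaluating a putative relation $\sum c_j\bar\beta_j=0$ on the restricted hyperplane $V(\bar\alpha_j)$ collapses it to $c_j\bar\beta_j|_{V(\bar\alpha_j)}=0$, and $\bar\beta_j|_{V(\bar\alpha_j)}$ is nonzero when no other $\bar\alpha_h$ is proportional to $\bar\alpha_j$; and (ii) the $(n+1)\times d$ matrix $(\partial_i\alpha_j)$ has rank $n+1$ because $\A$ is essential. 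Combining (i) and (ii), the restrictions $\bar{\partial_iQ}$ are $\KK$-linearly independent in $\bar S_{d-1}$, so $\ker\varphi_{d-2}=0$. Since $\dim_\KK S_{d-2}\le\dim_\KK(S/J(\A))_{d-1}$ whenever $n\ge2$ (the case $n=1$ being handled by Proposition~\ref{prop:l=2SLP}), this gives maximal rank.

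The main obstacle is $k=d-1$. Here I would combine Lemmas~\ref{theo:relationinject} and \ref{theo:relationsurj} with Corollary~\ref{corol:SLPsaturideal}, which guarantees that $S/J(\A)^{\sat}$ has the SLP, in order to reduce the question to multiplication by $\ell$ on the local cohomology module $J(\A)^{\sat}/J(\A)=H^0_{\mathfrak m}(S/J(\A))$ in degrees $d-1$ and $d$. Proposition~\ref{prop:shapergin} — which says that $\rgin(J(\A))$ has minimal generators only in degrees $\ge d-1$ and that no minimal generator lies in $\KK[x_2,\ldots,x_n]$ alone — together with the injectivity of $\varphi_{d-2}$ already established, should force any failure of maximal rank of $\varphi_k$ to be pushed into degrees $k\ge d$. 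The delicate point is the final dimension bookkeeping needed to conclude that at least one of the kernel and the cokernel of $\varphi_{d-1}$ vanishes.
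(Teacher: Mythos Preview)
Your handling of $k\le d-3$ and $k=d-2$ is correct, and the argument you give for $k=d-2$ (the restrictions $\bar{\partial_iQ}$ are linearly independent because the $\bar\beta_j$ are independent for generic $\ell$ and the coefficient matrix $(\partial_i\alpha_j)$ has full rank by essentiality) is sound. The gap is at $k=d-1$: you acknowledge that the local-cohomology reduction via Lemmas~\ref{theo:relationinject} and~\ref{theo:relationsurj} leaves ``delicate dimension bookkeeping'' open, and indeed nothing in those lemmas or in Proposition~\ref{prop:shapergin} gives you a handle on the multiplication map on $(J(\A)^{\sat}/J(\A))$ in degree $d-1$ without further input. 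That route does not close.

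The paper does not go through local cohomology for this step. Instead it applies (a non-Artinian version of) the criterion of Mezzetti--Mir\'o-Roig--Ottaviani: failure of WLP at the first nontrivial degree forces the generators $\partial_0Q,\dots,\partial_nQ$ to become $\KK$-linearly dependent upon restriction to a \emph{general} hyperplane $H$. A relation $\sum_i c_i\,\overline{\partial_iQ}=0$ in $S/(\ell)$ says exactly that the constant derivation $\delta=\sum_i c_i\,\partial_{x_i}$ lies in $D(\A^H)_0$, the degree-zero part of the module of logarithmic vector fields on the restricted arrangement $\A^H$. Since $\A$ is essential, so is $\A^H$, and an essential arrangement has $D(\A^H)_0=0$; contradiction. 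Notice that this is the same computation you carried out for $k=d-2$, repackaged in the language of logarithmic derivations: your condition~(ii) is precisely the statement that no nonzero constant vector field is tangent to all hyperplanes of an essential arrangement. So you already had the key mechanism in hand; what is missing from your proposal is the Mezzetti--Mir\'o-Roig--Ottaviani bridge that converts failure of maximal rank for $\varphi_{d-1}$ back into linear dependence of the restricted partials, rather than the detour through $H^0_{\mathfrak m}(S/J(\A))$.
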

\begin{proof}
Since $J(\A)$ is generated in degree $d-1$ then $S/J(\A)$ has the WLP in any degree $k \leq d-2$. 
By generalizing \cite[Theorem 3.2]{mezzetti2013laplace} to the non Artinian case, if $S/J(\A)$ fails the WLP in degree $d-1$, then the partial derivatives of $Q(\A)$ are $\KK$-dependent on every general central hyperplane $H$. This implies that there exists $\delta$ a logarithmic vector field of degree zero in $D(A^{H})$, where $\A^H$ is the restriction of $\A$ to $H$. However, this is impossible since $\A^H$ is essential from the fact that $\A$ is essential.
\end{proof}


In \cite{abe2018plus}, the author generalized the notions of free and nearly free (see \cite{dimca2015nearly}) to central and essential arrangements in any dimension. However, the definition can be given also for non-essential arrangements.


\begin{Definition}\label{def:plusonegen}
Let $\A=\{H_1,\dots,H_d\}$ be an arrangement in $\KK^{n+1}$. We say that $\A$ is \textit{plus-one generated} with \textit{exponents} $\POexp(\A)=(a_1,\dots,a_{n+1})$ and \textit{level} $a$ if $D(\A)$ has a minimal free resolution of the following form
\begin{equation}\label{eq:respogDA}
\xymatrix{0 \ar[r] & S(-a-1) \ar[rr]^-{(\alpha,g_1,\dots,g_{n+1})} && S(-a) {\oplus}(\bigoplus_{i=1}^{n+1} S(-a_i)) \ar[r] & D(\A) \ar[r] & 0 }
\end{equation}
\end{Definition}

\begin{Remark} Let $\A$ be a plus-one generated arrangement in $\KK^{n+1}$ with exponents $\POexp(\A)=(a_1,\dots,a_{n+1})_{\le}$ and level $a$. Since $\A$ is central, then there exists $k\ge2$ such that $(a_1,\dots,a_{n+1})_{\le}=(0,\dots,0,1,a_k,\dots,a_{n+1})_{\le}$. If $\A$ is essential, then $k=2$. If $\A$ is non-essential, then $k\ge3$.
\end{Remark}

Directly from the definition, we can show the following (see \cite[Lemma 3.3]{palezzato2021localiz})

\begin{Lemma}\label{lemma:fromresDAtoJac}
Let $\A=\{H_1,\dots,H_d\}$ be an arrangement in $\KK^{n+1}$. $\A$ is plus-one generated with exponents $\POexp(\A)=(a_1,\dots,a_{n+1})_{\le}$ and level $a$
if and only if $S/J(\A)$ has a minimal free resolution of the form
\begin{equation}\label{eq:respogjac}
0{\to}S(-d-a){\to} S(-d-a+1){\oplus}(\bigoplus_{i=k}^{n+1} S(-d-a_i+1)) {\to} S(-d+1)^{n-k+3}{\to}S.
\end{equation}
Moreover, the map 
$$\partial_3\colon S(-d-a)\to S(-d-a+1)\oplus(\bigoplus_{i=k}^{n+1} S(-d-a_i+1))$$
is defined by a matrix of the form $(\alpha, g_{i_1}, \dots, g_{i_{n-k+2}})$, where $1\le i_1<\cdots<i_{n-k+2}\le n+1$.
Notice that $n-k+3$ coincides with the codimension of the center of $\A$ or, equivalently, the rank of $\A$.
\end{Lemma}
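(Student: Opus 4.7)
The plan is to pass between the minimal free resolution of $D(\A)$ and that of $S/J(\A)$ by means of the fundamental short exact sequence
$$0 \to D_0(\A) \to \Der_{\KK^{n+1}} \xrightarrow{\delta\mapsto\delta(Q(\A))} J(\A)(d-1) \to 0,$$
where $D_0(\A)=\{\delta\in\Der_{\KK^{n+1}}\mid\delta(Q(\A))=0\}$ is the module of syzygies among the partial derivatives of $Q(\A)$. Writing $\theta_E=\sum_{i=0}^{n}x_i\partial_{x_i}$ for the Euler derivation and using Euler's identity $\theta_E(Q(\A))=d\cdot Q(\A)$, one obtains in characteristic zero the decomposition $D(\A)=S\cdot\theta_E\oplus D_0(\A)\cong S(-1)\oplus D_0(\A)$; moreover, in the non-essential case a further free summand $S^{k-2}\subseteq D_0(\A)$ splits off, coming from the constant derivations $\partial_{x_i}$ for the $k-2$ variables that do not occur in $Q(\A)$. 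Cancelling these $k-2$ summands against the matching copies of $S$ in $\Der_{\KK^{n+1}}\cong S^{n+1}$ produces the reduced exact sequence
$$0 \to D_0'(\A) \to S^{n-k+3} \to J(\A)(d-1) \to 0,$$
where $S^{n-k+3}$ is spanned by the partials with respect to the variables actually appearing in $Q(\A)$, so that its rank equals the rank of $\A$.

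For the forward direction, I would extract a minimal free resolution of $D_0'(\A)$ from the plus-one generated resolution~\eqref{eq:respogDA} by deleting from the middle term the summand $S(-1)$ (corresponding to $\theta_E$) together with the $k-2$ summands $S$ (corresponding to the constant derivations). Since $S(-a-1)$ is indecomposable of rank one and the removed direct summand $S(-1)\oplus S^{k-2}\subseteq D(\A)$ is free (hence has no syzygies), the compatibility of a minimal free resolution with direct sum decompositions forces $g_1=\cdots=g_{k-1}=0$ in the matrix $(\alpha,g_1,\dots,g_{n+1})$, so the deletion is automatic and yields the minimal resolution
$$0\to S(-a-1)\xrightarrow{(\alpha,g_{i_1},\dots,g_{i_{n-k+2}})} S(-a)\oplus\bigoplus_{i=k}^{n+1}S(-a_i)\to D_0'(\A)\to 0,$$
with $\{i_1,\dots,i_{n-k+2}\}=\{k,\dots,n+1\}$. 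Splicing this resolution with the reduced SES via the horseshoe lemma produces a length-three free resolution of $J(\A)(d-1)$; shifting the grading by $-(d-1)$ and appending the standard sequence $0\to J(\A)\to S\to S/J(\A)\to 0$ gives exactly the resolution~\eqref{eq:respogjac} of $S/J(\A)$ with $\partial_3$ of the claimed form.

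For the converse the same steps are reversed: given a minimal free resolution of $S/J(\A)$ of the form~\eqref{eq:respogjac}, one truncates to a resolution of $J(\A)$, shifts by $+(d-1)$, identifies the first syzygy module with $D_0'(\A)$ through the reduced SES, and then reinserts the $S(-1)$ summand for $\theta_E$ together with the $k-2$ copies of $S$ for the constant derivations to recover the plus-one generated resolution~\eqref{eq:respogDA} with exponents $(a_1,\dots,a_{n+1})_{\le}=(0,\dots,0,1,a_k,\dots,a_{n+1})_{\le}$ and level $a$. The main technical obstacle is the bookkeeping in the non-essential case: one must correctly split the exponents into the $k-2$ zeros (constant derivations), the entry equal to $1$ (Euler), and the $n-k+2$ remaining entries, and then invoke minimality to deduce the vanishing of the components of $(\alpha,g_1,\dots,g_{n+1})$ along the removed summands, so that the horseshoe lemma delivers a minimal resolution without further cancellation steps.
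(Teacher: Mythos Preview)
The paper does not give a self-contained proof of this lemma; it simply asserts that it follows ``directly from the definition'' and cites \cite[Lemma~3.3]{palezzato2021localiz}. Your argument is the standard passage from $D(\A)$ to $S/J(\A)$ via the syzygy module $D_0(\A)$ and the Euler decomposition $D(\A)\cong S(-1)\oplus D_0(\A)$, and it is essentially correct; this is almost certainly what the cited reference does as well.

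Two minor points of care. First, what you invoke as the ``horseshoe lemma'' is really just splicing: since $S^{n-k+3}$ is already free and surjects onto $J(\A)(d-1)$, one simply prepends a minimal resolution of its kernel $D_0'(\A)$; no horseshoe construction is involved, and minimality is immediate because the entries of the map $S(-a)\oplus\bigoplus_{i\ge k}S(-a_i)\to S^{n-k+3}$ have positive degree. Second, the assertion that $g_1=\cdots=g_{k-1}=0$ \emph{in the given presentation} is slightly too strong. Additivity of minimal free resolutions under direct sums only yields an \emph{isomorphism} of complexes, so in general one must first change basis in the middle free module to align the summands $S^{k-2}\oplus S(-1)$ with the free part $F\subseteq D(\A)$; only then do the corresponding components of the syzygy vanish. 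After such a change of basis the deletion is indeed automatic, the rest of your argument goes through verbatim, and the description of $\partial_3$ records the surviving entries. This is enough for the existence statement in the lemma.
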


Using Theorem~\ref{theo:bigequivbigdimSLP}, we have the following.
\begin{Theorem}\label{theo-plusone} 
Let $\A$ be a plus-one generated arrangement in $\KK^{n+1}$ with $n\ge3$. Then $S/J(\A)$ has the SLP.
\end{Theorem}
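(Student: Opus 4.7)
The plan is to reduce the claim directly to Theorem~\ref{theo:bigequivbigdimSLP}, which provides five equivalent conditions for a higher-dimensional graded quotient to satisfy the SLP. Applying that theorem to $I = J(\A) \subset S = \KK[x_0,\dots,x_n]$ requires two things: that $\dim(S/J(\A)) \ge 2$, and that one of conditions (2)--(5) holds — the most accessible being $\projdim(S/J(\A)) \le n$.

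First, the projective dimension. By Lemma~\ref{lemma:fromresDAtoJac}, the hypothesis that $\A$ is plus-one generated translates into a minimal graded free $S$-resolution of $S/J(\A)$ of length exactly three:
$$0 \to S(-d-a) \to S(-d-a+1) \oplus \bigoplus_{i=k}^{n+1} S(-d-a_i+1) \to S(-d+1)^{n-k+3} \to S \to S/J(\A) \to 0.$$
Hence $\projdim(S/J(\A)) = 3$, and since $n \ge 3$ we immediately get $\projdim(S/J(\A)) \le n$.

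Next, the dimension bound $\dim(S/J(\A)) \ge 2$. Because $\A$ is plus-one generated, the module $D(\A)$ is not free, so $|\A| \ge 2$, and in a central arrangement any two distinct hyperplanes have linearly independent defining forms $\alpha_i, \alpha_j$. On the codimension-$2$ linear subspace $V(\alpha_i,\alpha_j) \subset \KK^{n+1}$, the polynomial $Q(\A)$ clearly vanishes, and from the expansion
$$\partial_k Q(\A) = \sum_{l=1}^{d} (\partial_k \alpha_l) \prod_{m \neq l} \alpha_m,$$
every summand still contains $\alpha_i$ or $\alpha_j$ as a factor, so each partial derivative vanishes on $V(\alpha_i,\alpha_j)$ as well. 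Therefore $V(J(\A)) \supseteq V(\alpha_i,\alpha_j)$, yielding $\dim(S/J(\A)) \ge n-1 \ge 2$.

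With both hypotheses verified, Theorem~\ref{theo:bigequivbigdimSLP} yields the equivalence (1)$\Leftrightarrow$(2), and we conclude that $S/J(\A)$ has the SLP. The only delicate point is the dimension estimate in the boundary case $n=3$: Auslander--Buchsbaum together with $\projdim = 3$ only gives $\depth(S/J(\A)) = n-2 = 1$, hence $\dim \ge 1$, which is insufficient. The singular-locus argument via pairwise hyperplane intersections is what provides the extra unit of dimension needed precisely when $n = 3$; this is the main (though mild) obstacle to the proof.
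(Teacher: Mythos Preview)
Your proof is correct and follows exactly the route the paper indicates (the paper merely writes ``Using Theorem~\ref{theo:bigequivbigdimSLP}'' and states the result without further argument). You supply precisely the two verifications that theorem requires: the resolution from Lemma~\ref{lemma:fromresDAtoJac} gives $\projdim(S/J(\A))=3\le n$, and your singular-locus computation shows $\dim(S/J(\A))\ge n-1\ge 2$; your closing remark about why Auslander--Buchsbaum alone is not enough at $n=3$ is a nice clarification of where the geometric input is actually needed.
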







\begin{thebibliography}{10}

\bibitem{abe2018plus}
T.~Abe.
\newblock Plus-one generated and next to free arrangements of hyperplanes.
\newblock {\em {I}nternational {M}athematical {R}esearch
  {N}otices}, 2021(12), 9233--9261, 2021.

\bibitem{bayer1987criterion}
D.~Bayer and M.~Stillman.
\newblock A criterion for detecting m-regularity.
\newblock {\em Inventiones mathematicae}, 87(1):1--11, 1987.

\bibitem{Gin-freearr}
A.~M. Bigatti, E.~Palezzato, and M.~Torielli.
\newblock New characterizations of freeness for hyperplane arrangements.
\newblock {\em Journal of Algebraic Combinatorics}, 51(2), 297--315, 2020.

\bibitem{BPT2016}
A.~M. Bigatti, E.~Palezzato, and M.~Torielli.
\newblock Extremal behavior in sectional matrices.
\newblock {\em Journal of {A}lgebra and its {A}pplications}, 18(3), 2019.

\bibitem{Brenner07}
H.~Brenner and A.~Kaid.
\newblock Syzygy bundles on $\mathbb{P}^2$ and the weak {L}efschetz property.
\newblock {\em Illinois Journal of Mathematics}, 51(4), 1299--1308, 2007.



\bibitem{cookline2018}
D.~Cook II,B.~Harbourne, J.~Migliore and U.~Nagel
\newblock Line arrangements and configutration of points with an unexpected geometric property.
\newblock{\em Compositio Mathematica}, 154, 2150--2194, 2018.

\bibitem{di2014singular}
R.~Di~Gennaro, G.~Ilardi, and J.~Vall{\`e}s.
\newblock Singular hypersurfaces characterizing the {L}efschetz properties.
\newblock {\em Journal of the London Mathematical Society}, 89(1):194--212, 2014.


\bibitem{dimcapoplefs}
A.~Dimca and D.~Popescu.
\newblock Hilbert series and Lefschetz properties of dimension one almost complete intersections.
\newblock {\em Communications in Algebra}, 44, 4467--4482, 2016.

\bibitem{dimca2015nearly}
A.~Dimca and G.~Sticlaru.
\newblock Free and nearly free surfaces in $\PP^3$.
\newblock {\em Asian Journal of Mathematics}, 22, 787--810, 2018.

\bibitem{eisenbudsyzygy}
D.~Eisenbud.
\newblock {\em The geometry of syzygies}, Graduate Texts in Mathematics 229,
\newblock Springer-Verlag, New York, 2005.

\bibitem{galligo1974propos}
A.~Galligo.
\newblock A propos du th{\'e}oreme de pr{\'e}paration de {W}eierstrass.
\newblock In {\em Fonctions de plusieurs variables complexes}, pages 543--579.
  Springer, 1974.

\bibitem{harima2013lefschetz}
T.~Harima, T.~Maeno, H.~Morita, Y.~Numata, A.~Wachi, and J.~Watanabe.
\newblock {\em The {L}efschetz properties}, volume 2080 of {\em Lecture {N}otes
  in {M}athematics}.
\newblock Springer, 2013.





\bibitem{mezzetti2013laplace}
E.~Mezzetti, R.~Mir\'{o}-Roig, and G.~Ottaviani.
\newblock Laplace equations and the weak Lefschetz property.
\newblock {\em Canadian Journal of Mathematics}, 65:634--654, 2013.

\bibitem{migliore1998introliaison}
J.C.~Migliore.
\newblock Introduction to Liaison theory and deficiency modules.
\newblock {\em Progress in Mathematics},165, Birkh\"auser, 1998.

\bibitem{migliore2011monomial}
J.C.~Migliore, R.~Mir\'{o}-Roig, and U.~Nagel.
\newblock Monomial ideals, almost complete intersections and the weak Lefschetz property.
\newblock {\em Transactions of the American Mathematical Society}, 363:229--257, 2011.

\bibitem{migliore2013survey}
J.C.~Migliore and U.~Nagel.
\newblock Survey article: a tour of the weak and strong {L}efschetz properties.
\newblock {\em Journal of Commutative Algebra}, 5(3):329--358, 2013.


\bibitem{okonek}
C.~Okonek, M.~Schneider and H.~Spindler.
\newblock {\em Vector Bundles on Complex Projective Spaces},
\newblock {\em Progress in Mathematics},3, Birkh\"auser, 1998.

\bibitem{orlterao}
P.~Orlik and H.~Terao.
\newblock {\em Arrangements of Hyperplanes}, volume 300 of Grundlehren der
  Mathematischen Wissenschaften.
\newblock Springer-Verlag, Berlin (1992).

\bibitem{palezzato2018free}
E.~Palezzato and M.~Torielli.
\newblock Free hyperplane arrangements over arbitrary fields.
\newblock {\em {J}ournal of {A}lgebraic {C}ombinatorics}, 52(2):237--249, 2020.

\bibitem{palezzato2020klef}
E.~Palezzato and M.~Torielli.
\newblock k-Lefschetz properties, sectional matrices and hyperplane arrangements. 
\newblock {\em {J}ournal of {A}lgebra}, 590:215--233, 2021.

\bibitem{palezzato2020lefschetz}
E.~Palezzato and M.~Torielli.
\newblock Lefschetz properties and hyperplane arrangements.
\newblock {\em {J}ournal of {A}lgebra}, 555:289--304, 2020.

\bibitem{palezzato2021localiz}
E.~Palezzato and M.~Torielli.
\newblock Localization of plus-one generated arrangements.
\newblock {\em Communications in Algebra}, 49(1):301--309, 2021.


\bibitem{Reid91}
L.~Reid, L.~Roberts and M.~Roitman, 
\newblock On complete intersections and their Hilbert functions. \newblock {\em Canada Mathematical Bulletin}, 34(4), 525--535, 1991.


\bibitem{sernesilocal}
E.~Sernesi.
\newblock The local cohomology of the Jacobian ring.
\newblock{\em Documenta Mathematica}, 19: 541--565, 2014.

\bibitem{Stanley80}
E.~Stanley, 
\newblock Weyl groups, the hard Lefschetz theorem, and the Sperner property.
\newblock {\em SIAM J. Algebraic Discrete Methods} 1, 168--184, 1980.

\bibitem{terao1980arrangementsI}
H.~Terao.
\newblock Arrangements of hyperplanes and their freeness {I}.
\newblock {\em J. Fac. Sci. Univ. Tokyo Sect. IA Math.}, 27(2):293--312, 1980.


\end{thebibliography}

\end{document}